\newtheorem{theorem}{Theorem}[section]
\newtheorem{corollary}[theorem]{Corollary}
\newtheorem{lemma}[theorem]{Lemma}
\newtheorem{question}[theorem]{Question}
\newtheorem{remark}[theorem]{Remark}
\theoremstyle{definition}
\newtheorem{definition}[theorem]{Definition}
\newcommand{\N}{\mathbb{N}}
\newcommand{\T}{\mathbb{T}}
\newcommand{\Pb}{\mathbb{P}}
\newcommand{\cG}{\mathcal{G}}
\newcommand{\floor}[1]{\left \lfloor #1 \right \rfloor }
\title{Semigroup Identities and varieties of Plactic Monoids}
\begin{document}
\date{\today}
\keywords{identities, varieties, plactic monoids, upper triangular matrix semigroups, tropical semiring}
\maketitle
\begin{center}
THOMAS AIRD\footnote{This research was supported by a University of Manchester Dean's Scholarship Award. Email \texttt{Thomas.Aird@manchester.ac.uk}.
} \\ \ \\
Department of Mathematics, University of Manchester, \\
Manchester M13 9PL, UK.

\end{center}

\begin{abstract}
We study the semigroup identities satisfied by finite rank plactic monoids. We find a new set of semigroup identities of the plactic monoid of rank $n$ for $n \geq 4$, which are shorter than those previously known when $n \geq 6$. Using these semigroup identities we show that for all $n \in \N$, the plactic monoid of rank $n$ satisfies a semigroup identity not satisfied by the semigroup of $(n+1) \times (n+1)$ upper triangular tropical matrices. We then prove that the plactic monoid of rank $n$ generates a different semigroup variety for each rank $n$.
\end{abstract}

\section{Introduction}
\par The plactic monoids are fundamental algebraic objects which have attracted a lot of interest due to their connections to many different mathematical fields, including representation theory and combinatorics \cite{cain2015finite,hage2017knuth,lopatkin2016cohomology}.
They were first described implicitly by Schensted as a way of finding the maximal length of a non-decreasing subsequence of a given word \cite{Sch1961}. Building on this, Knuth then found a set of defining relations for the plactic monoids, which are now referred to as the Knuth relations \cite{Knuth70}. Subsequently, Lascoux and Sch\"utzenberger \cite{LS81} carried out a systematic study of the plactic monoids. 

\par Studying the semigroup identities satisfied by the plactic monoid of rank $n$, $\Pb_n$, is an active area of research \cite{ATropicalPlactic,NoteOnPlactic,ZurPlacticAlgebra,JKPlactic,KubatOkninskiPlactic3}. While the plactic monoid of rank 1 is a free commutative monoid and hence satisfies the same identities as the free commutative monoids and it can be easily seen that the plactic monoid of rank $2$ satisfies the same identities as the bicyclic monoid; research into the semigroup identities satisfied by the plactic monoid of rank $n$ for $n \geq 3$ has only recently begun. 
This started with Kubat and Okni{\'n}ski \cite{KubatOkninskiPlactic3} showing that the plactic monoid of rank 3 satisfies the semigroup identity $uvvuvu = uvuvvu$ where $u$ and $v$ are the left and right side of Adian's identity respectively. Then, Izhakian \cite{ZurPlacticAlgebra} and Cain, Klein, Kubat, Malheiro, and Okni\'nski \cite{NoteOnPlactic}, found (different) faithful representations of the plactic monoid of rank 3 in $UT_3(\T) \times UT_3(\T)$, thus, showing that $\Pb_3$ satisfies every identity satisfied by $UT_3(\T)$. 
Moreover, in \cite{NoteOnPlactic} the authors showed that the shortest identity satisfied by the plactic monoid of rank $n$ has length greater than $n$. 
Hence, no single semigroup identity is satisfied by every finite rank plactic monoid.
\par Building on these results, Johnson and Kambites \cite{JKPlactic} showed that every finite-rank plactic monoid can be represented by matrices over the tropical semiring. Using these representations they showed that 
every identity satisfied by the plactic monoid of rank $n$ is satisfied by $UT_n(\T)$ and the plactic monoid of rank $n$ satisfies every identity satisfied by $UT_d(\T)$ where $d = \floor{\frac{n^2}{4}+1}$. In particular, they showed that $\Pb_3$ and $UT_3(\T)$ generate the same semigroup variety.

\par The author \cite[Corollary 5.4]{ATropicalPlactic} previously showed that there exists an identity satisfied by the plactic monoid of rank 4 that is not satisfied by $UT_5(\T)$ (and indeed that $UT_n(\T)$ and $UT_{n+1}(\T)$ do not satisfy the same semigroup identities). In this paper, we generalise this result, showing that for all $n \in \N$, there exist semigroup identities satisfied by $\Pb_n$ which are not satisfied by $UT_{n+1}(\T)$. 
Hence, by a result of Johnson and Kambites \cite[Theorem 4.4]{JKPlactic}, we show that for all $n \in \N$, there exists an identity satisfied by $\Pb_n$ which is not satisfied by $\Pb_{n+1}$, and that if the plactic monoid of rank $n$ generates the same variety as $UT_k(\T)$ for some $k$, then $n = k$.
\par It is known that the variety generated by $UT_n(\T)$ is equal to the variety generated by $\Pb_n$ for $n \leq 3$. It remains open if the variety generated by $UT_n(\T)$ is equal to the variety generated by $\Pb_n$ for any $n \geq 4$.
\par Including this introduction, this paper comprises 4 sections. In Section \ref{prelim}, we introduce some notations and definitions that we use throughout. In Section \ref{RepresenationChap}, we recall the tropical representation given in \cite{JKPlactic}, proving several results about the representation and introducing two new functions which we use to prove a technical result. In Section \ref{PlacIdenChap}, we construct a new set of semigroup identities satisfied by the plactic monoid of rank $n$. Using these identities we show that plactic monoids of different rank generate different semigroup varieties.

\section{Preliminaries} \label{prelim}
Given an alphabet $\Sigma$, we denote the monoid of all words over $\Sigma$ under concatenation by $\Sigma^*$ and the subsemigroup of all \emph{non-empty} words over $\Sigma$ by $\Sigma^+$. For $w \in \Sigma^*$, we write $|w|$ for the \emph{length} of $w$ and $w_i$ for the $i$th letter of $w$. For $u,v \in \Sigma^*$ we say $u$ is a \emph{factor} of $v$ if there exist $s,t \in \Sigma^*$ such that $v = sut$, and say $u$ is a \emph{scattered subword} of $v$ if $u = v_{j_1}\cdots v_{j_k}$ for some $1 \leq j_1 < \cdots < j_k \leq |v|$.
\par A (non-trivial) \emph{semigroup identity} is a pair of (distinct) non-empty words, denoted $``u = v"$. For a semigroup $S$, we say that $S$ satisfies the identity $u=v$ if every morphism from $\Sigma^+$ to $S$ maps $u$ and $v$ to the same element of $S$ and define the \emph{variety generated by} $S$ to be the class of all semigroups that satisfy all semigroup identities satisfied by $S$.

\par Let $S$ be a semigroup and $w \in \{a,b\}^+$; then for $x,y \in S$, write $w(a \mapsto x, b \mapsto y)$ to denote the evaluation of $w$ in $S$ obtained by performing the substitution $a \mapsto x$ and $b \mapsto y$. In the case where $S=\Omega^+$ for some alphabet $\Omega$, we write $w[a \mapsto x, b \mapsto y]$, rather than $w(a \mapsto x, b \mapsto y)$, to indicate that $w[a \mapsto x, b \mapsto y]$ is again a word.

\par The \emph{tropical semiring}, denoted $\T$, is the set of real numbers augmented with $-\infty$ defined with two associative binary operations, $a \oplus b = \max(a,b)$ and $ab = a + b$, being the semiring addition and multiplication, respectively. This forms a semiring structure with addition distributing over maximum. 
For $n \in \N$, we define $M_n(\T)$ to be the multiplicative semigroup of all $n \times n$ matrices over $\T$, and $UT_n(\T)$ to be the subsemigroup of all $n \times n$ upper triangular matrices over $\T$, that is, matrices in which every entry below the diagonal is $-\infty$.
\par For a matrix $X \in M_n(\T)$, we write $G_X$ for the \emph{weighted digraph associated to} $X$, that is, the weighted digraph with vertex set $\{1,\dots,n\}$ and edge set $E(G_X)$ containing, 
for all $1 \leq i, j \leq n$ such that $X_{i,j} \neq -\infty$, a directed edge $(i,j)$ with weight $X_{i,j}$.

\par Then, for $X,Y \in M_n(\T)$, we write $G_{X,Y}$ for the \emph{labelled weighted digraph associated to} $X$ \emph{and} $Y$, that is the labelled weighted digraph with vertex set $\{1,\dots,n\}$ and edge set $E(G_X) \sqcup E(G_Y)$ with the edges from $G_X$ labelled by $X$ and the edges from $G_Y$ labelled by $Y$.

\par A \emph{path} $\gamma$ on a digraph is a series of edges $(i_1,j_1),\dots,(i_m,j_m)$ such that $j_k=i_{k+1}$ for all $1 \leq k < m$. We say $g$ is a \emph{node} of $\gamma$ if there exists an edge starting or ending at $g$ in $\gamma$, and call an edge a \emph{loop} if it starts and ends at the same node. 
We say $\delta$ is a \emph{subpath} of $\gamma$ if there exist paths $\delta_1,\delta_2$ such that $\gamma = \delta_1 \circ \delta \circ \delta_2$, and say $\delta$ is a \emph{scattered subpath} of $\gamma$ if there exist paths $\gamma_0,\dots, \gamma_k$ such that $\gamma = \gamma_0 \circ \delta_1 \circ \gamma_1 \circ \cdots \circ \delta_k \circ \gamma_k$ and $\delta = \delta_1 \circ \cdots \circ \delta_k$.

\par A path $\gamma$ is said to have \emph{length} $m$ if $\gamma$ contains $m$ edges (counted with multiplicity), written $|\gamma| = m$. A path is called \emph{simple} if it does not contain any loops. Remark that for any path $\gamma$ there exists a unique maximal-length simple scattered subpath formed by removing all the loops in $\gamma$.
For any word $w \in \{X,Y\}^+$ and $\gamma$ a path in $G_{X,Y}$, we say $\gamma$ is \emph{labelled} by $w$ if $|\gamma| = |w|$ and, for all $1 \leq r \leq |\gamma|$, the $r$th edge of $\gamma$ is labelled by $w_r$, the $r$th letter of $w$. Let $w(\gamma)$ denote the \emph{weight} of $\gamma$, that is, $w(\gamma)$ is the sum of the weights of each edge in $\gamma$.

\par By the definition of $\T$, it can be easily seen that, for $w \in \{a,b\}^*$ and $X,Y \in M_n(\T)$, the $ij$th entry of $w(a \mapsto X, b \mapsto Y)$ is given by the maximum weight of all paths from $i$ to $j$ labelled by the word $w[a \mapsto X,b \mapsto Y]$ in $G_{X,Y}$.
\par For $n \in \N$, we define the plactic monoid of rank $n$, $\Pb_n$, to be the monoid generated by the set $\{1,\dots,n\}$ satisfying the Knuth relations:
\begin{align*}
bca &= bac \text{ for } 1 \leq a < b \leq c \leq n \text{, and } \\ 
cab &= acb \text{ for } 1 \leq a \leq b < c \leq n.
\end{align*}
The plactic monoid of rank $n$ may also be defined combinatorially. A \emph{Young diagram} is a finite left-aligned array of equally-sized boxes in which each row has at most the number of boxes as the box below. Then, we define a \emph{semistandard Young tableau} to be a Young diagram with an element from $\N$ in each box such that the columns are strictly decreasing from top-to-bottom and the rows are weakly increasing from left-to-right.
\par Now, we can define $\Pb_n$ by taking the alphabet $\{1,\dots,n\}$ and identifying all words in $\{1,\dots,n\}^*$ which produce the same semistandard Young tableau under Schensted's insertion algorithm \cite{Sch1961}. For example, the Young tableau
\[\begin{ytableau}
3 \\
1 & 1 & 4
\end{ytableau}\]
can be obtained by applying Schensted's insertion algorithm to the words $1314$, $1341$ or $3114$, all of which can easily be seen to be equal in the plactic monoid by using the Knuth relations.

\section{Plactic Representation} \label{RepresenationChap}
\par We begin this section by introducing some notation. Let $\N = \{1,2,\dots\}$, $[n] = \{1,\dots,n\}$, and $2^{[n]}$ be the power set of $[n]$.
For $S,T \in 2^{[n]}$, we write $S^i$ for the $i$th smallest element of $S$, and say $S \leq T$ if $|S| \geq |T|$ and $S^i \leq T^i$ for each $i \leq |T|$. Then for $S \leq T$, we write $[S,T]$ for the order interval from $S$ to $T$, and $\cup[S,T]$ for the union of all the sets in $[S,T]$. 
\par We can now give the definition of the faithful representation of the plactic monoid of rank $n$ by tropical matrices given in \cite{JKPlactic}.
\begin{definition}
For all $n \in \N$, define the map $\rho_n \colon [n]^* \rightarrow M_{2^{[n]}}(\T)$, where
\[ \rho_n(x)_{P,Q} = 
\begin{cases}
-\infty &\text{if } |P| \neq |Q| \text{ or } P \nleq Q; \\
1 &\text{if } |P| = |Q| \text{ and } x \in \cup[P,Q];\\
0 &\text{otherwise}.
\end{cases}\]
for each generator $x \in \Pb_n$, extending multiplicatively for products of generators and mapping $e$, the identity element of $\Pb_n$, as follows
\[ 
\rho_n(e) =
\begin{cases}
-\infty &\text{if } |P| \neq |Q| \text{ or } P \nleq Q; \\
0 &otherwise.
\end{cases}
\]
\end{definition}

\begin{theorem}[{\cite[Theorem 2.9]{JKPlactic}}]
For all $n \in \N$, $\rho_n \colon \Pb_n \rightarrow M_{2^{[n]}}(\T)$ is a faithful representation of $\Pb_n$.
\end{theorem}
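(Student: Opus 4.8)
The plan is to prove the two assertions — that $\rho_n$ is a well-defined monoid homomorphism and that it is injective — in turn.

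For well-definedness, I would first record the elementary identity $\cup[P,Q]=\bigcup_{i=1}^{k}[P^i,Q^i]$, valid whenever $|P|=|Q|=k$ and $P\le Q$: one inclusion is immediate, and for the other, given $c\in[P^i,Q^i]$ the set $\{P^1,\dots,P^{i-1},c,Q^{i+1},\dots,Q^k\}$ lies in $[P,Q]$. This makes every entry of $\rho_n(w)$ explicit: combining it with the path description of matrix products over $\T$ from Section~\ref{prelim}, the entry $\rho_n(w)_{P,Q}$ (which is finite exactly when $|P|=|Q|=:k$ and $P\le Q$) is the maximum, over all chains $P=R_0\le R_1\le\dots\le R_{|w|}=Q$ of $k$-element sets, of the number of $j$ with $w_j\in\cup[R_{j-1},R_j]$ — informally, the largest number of letters of $w$ that can be ``captured'' by $k$ order-preserving markers travelling monotonically rightwards from the configuration $P$ to the configuration $Q$ while $w$ is read. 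Since $\rho_n$ is defined by extending multiplicatively, and $\rho_n(e)$ is (as one checks directly) a two-sided identity on the submonoid generated by the $\rho_n(x)$, it remains only to verify that $\rho_n$ respects the two families of Knuth relations; that is, for $a<b\le c$ one must show the number of letters of $bca$ capturable in the above sense equals that of $bac$, and similarly $cab$ versus $acb$ for $a\le b<c$. This I would settle by a finite case analysis, organised according to which of $a,b,c$ each marker covers (at most three markers are ever relevant), using $a<b\le c$ (resp.\ $a\le b<c$) to rule out the configurations that could distinguish the two sides.

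For injectivity I would show that $\rho_n(w)$ determines the semistandard tableau $P(w)$ produced from $w$ by Schensted insertion; since two words are equal in $\Pb_n$ precisely when they yield the same tableau, this gives injectivity of the induced map on $\Pb_n$. Fix $v\in[n]$ and $1\le k\le v$, and take $P=\{1,\dots,k\}$ and $Q=\{v-k+1,\dots,v\}$, so that $P\le Q$ and, by the identity above, $\cup[P,Q]=\{1,\dots,v\}$. I claim $\rho_n(w)_{P,Q}=\lambda^{(v)}_1+\dots+\lambda^{(v)}_k$, where $\lambda^{(v)}$ is the shape of the subtableau of $P(w)$ on the entries $\le v$ — equivalently, the shape of $P(w|_{\le v})$, with $w|_{\le v}$ the subword of $w$ keeping the letters $\le v$. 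The inequality ``$\le$'' is the easy direction: in any capturing routing, assign each captured letter to a covering marker; letters sharing a marker form a weakly increasing subsequence of $w$ with all values $\le v$, so the captured positions form a union of at most $k$ weakly increasing subsequences of $w|_{\le v}$, and by Greene's theorem such a union has at most $\lambda^{(v)}_1+\dots+\lambda^{(v)}_k$ elements. For ``$\ge$'', I would take an optimal family of $k$ pairwise disjoint weakly increasing subsequences of $w|_{\le v}$ of total length $\lambda^{(v)}_1+\dots+\lambda^{(v)}_k$, chosen to be pairwise non-crossing in value (so that at every position their current values are strictly increasing with the index), relabel them $S_1,\dots,S_k$ accordingly, and route marker $i$ along $S_i$ while leaving all markers fixed at letters $>v$; non-crossingness keeps the markers ordered, forces marker $i$ to remain $\le v-k+i=Q^i$, and allows it to be pushed exactly to $Q^i$ at the final step, so the whole family is captured. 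Granting the claim, $\rho_n(w)$ determines $\lambda^{(v)}$ for every $v\in[n]$, hence the chain $\emptyset=\lambda^{(0)}\subseteq\lambda^{(1)}\subseteq\dots\subseteq\lambda^{(n)}$, and hence — via the standard correspondence between such chains of partitions and semistandard tableaux — the tableau $P(w)$ itself, as required.

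The step I expect to be the crux is the ``$\ge$'' half of the claim: converting an optimal family of disjoint weakly increasing subsequences into a routing of the markers that is simultaneously monotone, order-preserving, and compatible with the prescribed endpoints $P$ and $Q$. This hinges on the fact that Greene's maximum can always be realised by a non-crossing family of subsequences, which then matches, in value order, the ordered markers $1<2<\dots<k$; extracting and justifying that non-crossing family (from the RSK / Greene--Kleitman description of the relevant poset) is where the real work lies. The remaining ingredients — the ``$\le$'' inequality, the Knuth-relation check, and the passage from the shapes $\lambda^{(v)}$ back to $P(w)$ — are routine.
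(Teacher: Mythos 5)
The first thing to say is that this paper contains no proof of this statement: it is imported verbatim from Johnson and Kambites \cite[Theorem 2.9]{JKPlactic}, and the only piece of the underlying machinery reproduced here is Lemma~\ref{readlengthlem}, the characterisation of $\rho_n(w)_{S,T}$ as the maximum length of a scattered subword of $w$ readable from $S$ to $T$. So there is no in-paper argument to compare yours against; I can only assess the proposal on its own terms. Your overall strategy --- identify the entries $\rho_n(w)_{P,Q}$ at the positions $P=\{1,\dots,k\}$, $Q=\{v-k+1,\dots,v\}$ with the Greene invariants $\lambda^{(v)}_1+\dots+\lambda^{(v)}_k$ of the restrictions $w|_{\le v}$, and recover $P(w)$ from the chain $\lambda^{(0)}\subseteq\cdots\subseteq\lambda^{(n)}$ --- is the natural one and is consistent with the source. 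The preliminary identity $\cup[P,Q]=\bigcup_i[P^i,Q^i]$, the computation $\cup[P,Q]=\{1,\dots,v\}$, the ``$\le$'' half of your claim, and the passage from the shape chain to the tableau are all correct as sketched.

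There are, however, two places where the proposal stops short of a proof. The main one is the ``$\ge$'' half, which you flag yourself: you need the Greene maximum to be attained by $k$ disjoint weakly increasing subsequences of $w|_{\le v}$ admitting a valid marker routing, i.e.\ such that at \emph{every} time step the $k$ current positions are pairwise distinct elements of $[n]$, ordered consistently with the marker indices, with marker $i$ confined to $[P^i,Q^i]=[i,v-k+i]$ and each captured letter $w_j$ lying in the interval $[R_{j-1}^i,R_j^i]$ of its marker. This is true --- it is essentially the non-intersecting-lattice-path form of Greene's theorem --- but it is the entire content of the injectivity argument, and the delicate cases (empty subsequences when $P(w|_{\le v})$ has fewer than $k$ rows, subsequences whose first letter is below the forced starting position $i$ of their marker, repeated letter values threatening the distinctness of the $R_j$) are precisely where it can go wrong and are not addressed. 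Gesturing at ``the RSK / Greene--Kleitman description'' does not discharge this. The second, smaller, gap is well-definedness: by multiplicativity it does reduce to checking $\rho_n(bca)=\rho_n(bac)$ and $\rho_n(cab)=\rho_n(acb)$ entrywise, each entry being a maximum over chains $P\le R_1\le R_2\le R_3\le Q$, but that finite check is asserted rather than performed. Neither gap is a wrong turn, but as written the argument is an outline of the right proof rather than a proof.
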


For $S,T \in 2^{[n]}$ and $S \leq T$, we define the \emph{chain length} of $[S,T]$ to be the maximum $t \in \N$ such that $S = P_1 < \cdots < P_t = T$ for some $P_1,\dots,P_t \in [S,T]$.
For $w \in [n]^*$ and $N \subseteq [n]$, let $|w|_N = \sum_{i \in N} |w|_i$, that is, the number of occurrences of the letters from $N$ in $w$.

\begin{lemma} \label{split}
Let $w \in [n]^*$ and $S,T \in 2^{[n]}$ such that $|S| = |T|$ and $S < T$. Then, there exists $N \in [S,T]$ such that $N \neq S$, $|w|_N \geq \min(|w|_S,|w|_T)$ and $[S,N]$ has chain length $\leq n$.
\end{lemma}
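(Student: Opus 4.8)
The plan is to turn the chain‑length condition into a displacement estimate and then to produce $N$ by a single, carefully chosen ``swap''. Since $|S|=|T|$, every set $P$ with $S\leq P\leq T$ satisfies $|P|=|S|=:k$, so throughout we work in the poset of $k$-element subsets of $[n]$; identify such a subset with its strictly increasing listing and order these tuples componentwise, writing $s_i=S^i$ and $t_i=T^i$. This poset is graded by $P\mapsto\sum_i P^i$, with each covering relation increasing $\sum_i P^i$ by exactly $1$; hence the chain length of $[S,N]$ equals $1+\sum_{i}(N^i-s_i)$. Consequently ``$[S,N]$ has chain length $\leq n$'' is equivalent to the displacement bound $\sum_i(N^i-s_i)\leq n-1$. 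If $\sum_i(t_i-s_i)\leq n-1$ we may take $N=T$ (which differs from $S$ and has $|w|_T\geq\min(|w|_S,|w|_T)$), so from now on assume $\sum_i(t_i-s_i)\geq n$.

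For the construction, fix an index $i$ with $s_i<t_i$ and an element $v$ with $s_i<v\leq t_i$ and $v\notin S$, and set $N=(S\setminus\{s_i\})\cup\{v\}$. I claim $N\in[S,T]\setminus\{S\}$: the tuple obtained from $(s_1,\dots,s_k)$ by replacing the $i$-th entry by $v$ lies componentwise between the sorted tuples of $S$ and $T$ (this uses only $s_i\leq v\leq t_i$), and an elementary fact — sorting a tuple that lies componentwise between two increasing tuples yields a tuple still lying componentwise between them — together with $v\notin S$ (so all entries stay distinct) shows that its sorted listing is a genuine $k$-set in $[S,T]$, distinct from $S$. Moreover $\sum_{x\in N}x=\sum_{x\in S}x+v-s_i$, so the displacement of $N$ from $S$ equals $v-s_i\leq t_i-1\leq n-1$, and $|w|_N=|w|_S-|w|_{s_i}+|w|_v$.

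It remains to choose $i$ and $v$ so that $|w|_N\geq m:=\min(|w|_S,|w|_T)$, i.e.\ so that $|w|_{s_i}-|w|_v\leq |w|_S-m=:\sigma=\max(0,|w|_S-|w|_T)$. Put $I=\{i:s_i<t_i\}$, non‑empty since $S<T$. From
\[ \sum_{i\in I}(|w|_{s_i}-|w|_{t_i})=\sum_{i=1}^{k}(|w|_{s_i}-|w|_{t_i})=|w|_S-|w|_T\leq\sigma \]
and $\sigma\geq0$, averaging over $I$ produces an index $i\in I$ with $|w|_{s_i}-|w|_{t_i}\leq\sigma$; if moreover $t_i\notin S$, then $v=t_i$ does the job and $|w|_N=|w|_S-(|w|_{s_i}-|w|_{t_i})\geq|w|_S-\sigma=m$, completing the argument.

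The step I expect to be the main obstacle is the remaining case, where the index $i$ found above has $t_i\in S$, so the naive swap collapses to a $(k-1)$-set. One route is to cascade: if $t_i=s_r$ then $r>i$ and $s_r<t_r$ (otherwise $t_r=s_r=t_i<t_r$), so the step repeats, terminating at an index $q$ with $t_q\notin S$; the sorting fact again shows $(S\setminus\{s_i\})\cup\{t_q\}\in[S,T]\setminus\{S\}$, of displacement $t_q-s_i\leq n-1$. The delicate point — the technical heart — is to keep the weight of this set at least $m$; this appears to require a finer choice of the initial index together with a case split according to whether $|w|_S\leq|w|_T$ (where $\sigma=0$ and one seeks an index $i$ with $|w|_{t_i}\geq|w|_{s_i}$ \emph{and} $t_i\notin S$) or $|w|_S>|w|_T$ (where the positive slack $\sigma$ can absorb a controlled loss), and possibly the alternative construction ``shift an appropriate block of consecutive elements of $S$ one step to the right,'' which is always valid and has displacement at most $k\leq n-1$. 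Carrying the displacement and weight bounds simultaneously through these collision cases is the main work.
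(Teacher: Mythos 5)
Your argument is incomplete, and the gap you flag yourself is exactly where the proof fails as written. Your positional pairing $s_i=S^i$, $t_i=T^i$ and the averaging over $I=\{i: s_i<t_i\}$ correctly produce an index with $|w|_{s_i}-|w|_{t_i}\leq\sigma$, and in the case $t_i\notin S$ the swap $N=(S\setminus\{s_i\})\cup\{t_i\}$ does everything you claim (the sorting fact and the displacement bound $t_i-s_i\leq n-1$ are fine). But when $t_i\in S$ you have no construction: the cascade $t_i=s_r\mapsto t_r\mapsto\cdots\mapsto t_q$ terminates at some $t_q\notin S$ with controlled displacement, yet $|w|_{t_q}$ bears no relation to $|w|_{s_i}$, so the weight bound $|w|_N\geq m$ can simply fail, and no amount of re-choosing the initial index within your positional pairing obviously repairs this (every index $i$ with $|w|_{s_i}-|w|_{t_i}\leq\sigma$ could a priori have $t_i\in S$). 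Since you explicitly defer this case as ``the main work,'' the proposal does not constitute a proof.

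The paper's resolution is a different pairing that makes the collision impossible: set $S'=S\setminus T$ and $T'=T\setminus S$, list each in increasing order, and pair the $i$-th smallest element of $S'$ with the $i$-th smallest element of $T'$. The identity $|w|_{T'}-|w|_{S'}=|w|_T-|w|_S$ shows the same averaging argument still yields an index $i$ with $|w|_{t_i}-|w|_{s_i}\geq\min(0,|w|_T-|w|_S)$, and now $t_i\in T\setminus S$ automatically, so $N=(S\setminus\{s_i\})\cup\{t_i\}$ is a genuine $k$-set with the required weight, and (after checking $S'\leq T'$, hence $s_i<t_i$) the displacement is $t_i-s_i\leq n-1$. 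The price of abandoning the positional swap is that $N\leq T$ is no longer immediate from your sorting fact --- the removed element $s_i$ and the inserted element $t_i$ occupy different positions $l_1\leq l_2$ in $S$ and $T$ --- and the paper spends the second half of its proof on a separate counting argument establishing $N\leq T$. If you want to complete your write-up, switching the averaging to the symmetric-difference pairing and then supplying that verification of $N\leq T$ is the missing content.
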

\begin{proof}
If the chain length of $[S,T] \leq n$, let $N = T$ and we are done. So suppose the chain length of $[S,T] > n$. Let $S' = S \setminus T$ and $T' = T \setminus S$. Then,
\begin{equation} \label{CompareSTequation}
    |w|_T - |w|_S = (|w|_{T'} + |w|_{S \cap T}) - (|w|_{S'} + |w|_{S \cap T}) = |w|_{T'} - |w|_{S'}.
\end{equation}
Now, as $|S| = |T|$, we have that $|S'| = |T'|$, so let $S' = \{s_1 < \dots < s_{|S'|} \}$ and $T' = \{t_1 < \dots < t_{|S'|} \}$. 
\par Suppose for a contradiction that for all $i$, $|w|_{t_i} - |w|_{s_i} < \min(0,|w|_T-|w|_S)$. Then,
\begin{align*}
    |w|_{T'} - |w|_{S'} =  \sum_{i=1}^{|S'|}(|w|_{t_i} - |w|_{s_i}) &< |S'|\min(0,|w|_T-|w|_S) \\
    &\leq \min(0,|w|_T - |w|_S) 
\end{align*}
contradicting (\ref{CompareSTequation}). So, let $i$ be such that $|w|_{t_i} - |w|_{s_i} \geq \min(0,|w|_T-|w|_S)$ and let $N = (S \setminus \{s_i\}) \cup \{t_i\} $. Then, 
\[|w|_N = |w|_S + |w|_{t_i} - |w|_{s_i} \geq  \min(|w|_{S},|w|_{T}).\]
\par Now, we show that $S \leq N$ and $[S,N]$ has chain length $t_i - s_i +1 \leq n$. Let $x \in S \cap T$, then $x = S^{s} = T^{t}$ for some $s$ and $t$ with $s \geq t$ as $S \leq T$. Define $S_x = S \setminus \{x\}$ and $T_x = T \setminus \{x\}$. Then,
\begin{enumerate}[(i)]
    \item $(S_x)^j = S^j \leq T^j = (T_x)^j$ for $1 \leq j < t$,
    \item $(S_x)^j = S^j < x \leq T^{j+1} = (T_x)^j$ for $t \leq j < s$,
    \item $(S_x)^j = S^{j+1} \leq T^{j+1} = (T_x)^j$ for $s \leq j \leq |S_x|$.
\end{enumerate}
Thus, $S_x \leq T_x$ and hence, by using this reasoning for all $x \in S \cap T$, we have that $S' \leq T'$. Suppose $s_i = S^{l_1}$ and $t_i = T^{l_2}$ for some $i \leq l_1,l_2 \leq |S|$, then by considering (i), (ii), and (iii), we have that $l_1 \leq l_2$.
\par As $S' < T'$ and $S' \cap T' = \emptyset$, we have that $s_i < t_i$. So, let $l_2 \leq m \leq |S|$ be the greatest such that $S^m < t_i$. Then, we have that
\[ N = \{ S^1 < \cdots < S^{l_1-1} < S^{l_1+1} < \cdots < S^{m} < t_i < S^{m+1} < \cdots <  S^{|S|} \}. \]
Remark that $S^l = N^l$ for $l < l_1$ and $l > m$. So clearly $S \leq N$ and $[S,N]$ has chain length $t_i - s_i +1 \leq n$. 
\par Finally, we aim to show that $N \leq T$. Note that $t_i \leq T^l$ for $l \geq l_2$. Hence $S^{l+1} \leq T^l$ for $l_2 \leq l < m$ and $N^m = t_i \leq T^m$. Thus, $N^l \leq T^l$ for $l_2 \leq l \leq m$. So, we need to show that $N^l = S^{l+1} \leq T^l$ for all $l_1 \leq l < l_2$.
\par For a contradiction, let $l_1 \leq k < l_2$ be the least such that $S^{k+1} > T^k$. Then, if $k \neq l_1$, 
\[ S^{k+1} > T^k > T^{k-1} \geq S^k, \]
and if $k = l_1$,
\[ S^{l_1+1} > T^{l_1} \geq S^{l_1} > S^{l_1-1}. \]
Hence, in either case,  $T^k \notin S \cap T$ as $S^{l_1} \notin S \cap T$.
As $T^k < T^{l_2} = t_i$, we have that $T^k = t_j$ for some $j < i$. Thus, there exist $x_1,\dots,x_{k-j} \in S \cap T$ such that 
\[ x_1 < \dots < x_{l_1-i} < S^{l_1} < x_{l_1-i+1} < \dots < x_{k-j} < T^k < S^{k+1}\]
as $S^{l_1} = s_i$ and $T^k = t_j$. However, by counting the number of $x_s$ and the number of elements of $S$ between $S^{l_1}$ and $S^{k+1}$, we have that $k-j-l_1+i \leq k-l_1$. Thus, $i \leq j$ and we have a contradiction. Therefore, $S^{l+1} \leq T^l$ for all $l_1 \leq l < m$ and $N \in [S,T]$.
\end{proof}

\begin{corollary} \label{splitapply}
Let $w \in [n]^*$ and $S,T \in 2^{[n]}$ such that $|S| = |T|$, and $S < T$. Then,
\begin{enumerate}[(i)]
    \item If $|w|_S \leq |w|_T$, then there exists $N \in [S,T]$ such that $N \neq S$, $[S,N]$ has chain length $\leq n$, and for all $M \in [S,N]$, $|w|_M \leq |w|_N$. \label{splitapplyincre}
    \item If $|w|_T \leq |w|_S$, then there exists $N \in [S,T]$ such that $N \neq T$, $[N,T]$ has chain length $\leq n$, and for all $M \in [N,T]$, $|w|_M \leq |w|_N$. \label{splitapplydecre}
\end{enumerate}
\end{corollary}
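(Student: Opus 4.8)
The plan is to derive both parts from Lemma~\ref{split} by a maximality argument. I would first record two elementary observations used throughout: since $|S| = |T|$, every set in $[S,T]$ has cardinality $|S|$, so Lemma~\ref{split} may be reapplied to any pair inside $[S,T]$; and the chain length of $[P,Q]$ is at most that of $[P,R]$ whenever $P \leq Q \leq R$ (a longest chain of $[P,Q]$, extended by $R$ when $Q \neq R$, is a chain of $[P,R]$), together with the symmetric statement that the chain length of $[Q,R]$ is at most that of $[P,R]$.

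For part~(\ref{splitapplyincre}) I would assume $|w|_S \leq |w|_T$, so that $\min(|w|_S,|w|_T) = |w|_S$, and apply Lemma~\ref{split} to get $N_0 \in [S,T]$ with $N_0 \neq S$, $|w|_{N_0} \geq |w|_S$, and $[S,N_0]$ of chain length $\leq n$. Then I would pick $N \in [S,N_0] \setminus \{S\}$ (non-empty, since it contains $N_0$) with $|w|_N$ maximal. One checks directly that $N \neq S$, that $[S,N] \subseteq [S,N_0]$ so $[S,N]$ has chain length $\leq n$, and that every $M \in [S,N]$ satisfies $|w|_M \leq |w|_N$ --- by maximality if $M \neq S$, and because $|w|_S \leq |w|_{N_0} \leq |w|_N$ if $M = S$.

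Part~(\ref{splitapplydecre}) is the one requiring a genuine idea: Lemma~\ref{split} always hands back an element close to $S$, but now I need one close to $T$. To bridge this I would iterate the lemma to build a chain $S = S_0 < S_1 < \cdots < S_k = T$ inside $[S,T]$, where $S_j$ is produced by applying Lemma~\ref{split} to the pair $(S_{j-1},T)$ (legal since $|S_{j-1}| = |T|$ and $S_{j-1} < T$), so that $[S_{j-1},S_j]$ has chain length $\leq n$ and $|w|_{S_j} \geq \min(|w|_{S_{j-1}},|w|_T)$; this terminates because the chain length of $[S_j,T]$ strictly decreases at each step, and $k \geq 1$. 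Using $|w|_T \leq |w|_S$, an immediate induction on $j$ gives $|w|_{S_j} \geq |w|_T$ for all $j \geq 1$, and hence, together with the case $k = 1$ (where $S_{k-1} = S$), $|w|_{S_{k-1}} \geq |w|_T$. I would then finish as in part~(\ref{splitapplyincre}), choosing $N \in [S_{k-1},T] \setminus \{T\}$ with $|w|_N$ maximal: this $N$ satisfies $N \neq T$, $[N,T] \subseteq [S_{k-1},T]$ so its chain length is $\leq n$, and $|w|_M \leq |w|_N$ for all $M \in [N,T]$, the case $M = T$ using $|w|_T \leq |w|_{S_{k-1}} \leq |w|_N$.

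I expect the only non-mechanical point to be spotting the iteration in part~(\ref{splitapplydecre}); the rest is bookkeeping with cardinalities and chain lengths, where the main risk of error lies in the (routine) monotonicity claims for chain lengths of sub-intervals.
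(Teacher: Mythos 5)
Your proposal is correct and follows essentially the same route as the paper: part (i) is Lemma~\ref{split} plus an extremal choice inside $[S,N_0]$, and part (ii) iterates Lemma~\ref{split} along a chain toward $T$ before making the analogous extremal choice near $T$ (the paper stops the iteration once the chain length of $[N_m,T]$ drops to $\leq n$ rather than running it all the way to $T$, and phrases the extremal choice via maximality/minimality in the partial order rather than of $|w|_N$, but these are cosmetic differences). The bookkeeping on cardinalities and chain-length monotonicity is as routine as you anticipate.
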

\begin{proof}
For case (i), apply Lemma~\ref{split} with $S$ and $T$ to obtain $N' \in [S,T]$ such that $N' \neq S$, $|w|_{N'} \geq |w|_S$, and $[S,N']$ has chain length $\leq n$. Then let $N \in [S,N]$ be minimal (in the partial order) such that $|w|_N \geq |w|_{N'}$ and $N \neq S$.
\par For case (ii), if the chain length of $[S,T] \leq n$, then let $N \in [S,T]$ be maximal (in the partial order) such that $|w|_N \geq |w|_S$ and $N \neq T$ and we are done, so suppose the chain length of $[S,T] > n$. 
We inductively define a sequence $N_1 < \cdots < N_m$ as follows. Let $S = N_1$ and for each $i \geq 1$, let $N_{i+1} \in [N_i,T]$ be obtained by applying Lemma~\ref{split} to $N_i$ and $T$. 
As the chain length of $[N_{i+1},T]$ is strictly less than the chain length of $[N_i,T]$, we may repeat until there exists $m \in \N$ such that the chain length of $[N_m,T]$ is at most $n$. Note that $|w|_{N_m} \geq |w|_T$ and $N_m \neq T$ as the chain length of $[N_{m-1},N_m]$ is at most $n$. Then, let $N \in [N_m,T]$ be maximal (in the partial order) such that $|w|_N \geq |w|_{N_m}$ and $N \neq T$.
\end{proof}


\begin{lemma}
$2^{[n]}$ is a lattice under $\leq$ where the meet and join operations are defined by
\begin{align*}
        S \wedge T &= \bigcup_{i=1}^k\min(S^i,T^i) \cup
    \begin{cases}
        \{S^{k+1},\dots,S^{|S|}\} &\text{if }|S| \geq |T| \\
        \{T^{k+1},\dots,T^{|T|}\} &\text{if }|S| < |T| \\
    \end{cases} \\
    S \vee T &= \bigcup_{i=1}^k\max(S^i,T^i)
\end{align*}
where $k = \min(|S|,|T|)$.
\end{lemma}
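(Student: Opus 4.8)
The plan is to verify directly from the definition of $\leq$ that the two displayed formulas produce, respectively, the least upper bound and the greatest lower bound of $S$ and $T$; once that is done the lemma follows. I would first record the routine fact that $\leq$ is a partial order on $2^{[n]}$: reflexivity is immediate; antisymmetry holds because $S \leq T \leq S$ forces $|S| = |T|$ and $S^i = T^i$ for every $i$; transitivity is a concatenation of inequalities between cardinalities and between $i$th smallest elements.

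For the join, set $k = \min(|S|,|T|)$. The first point is that $\bigcup_{i=1}^{k}\max(S^i,T^i)$ really is a $k$-element subset of $[n]$: since $S^i < S^{i+1}$ and $T^i < T^{i+1}$, the numbers $\max(S^i,T^i)$ are strictly increasing in $i$, so no collisions occur and $|S \vee T| = k$. Then $S \leq S \vee T$ because $|S| \geq k = |S \vee T|$ and $(S \vee T)^i = \max(S^i,T^i) \geq S^i$ for $i \leq k$, and symmetrically $T \leq S \vee T$. For minimality, if $U$ satisfies $S \leq U$ and $T \leq U$, then $|U| \leq |S|$ and $|U| \leq |T|$ give $|U| \leq k$, while $U^i \geq S^i$ and $U^i \geq T^i$ for $i \leq |U|$ give $U^i \geq \max(S^i,T^i)$; hence $S \vee T \leq U$.

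For the meet the argument is the mirror image, with the extra bookkeeping of splicing the common initial segment $\min(S^i,T^i)$, $i \leq k$, onto the tail of the longer set. Assume without loss of generality $|S| \geq |T|$, so $k = |T|$. As before $\min(S^i,T^i)$ is strictly increasing for $i \leq k$, and $\min(S^k,T^k) \leq S^k < S^{k+1}$, so the displayed union is a strictly increasing list and $|S \wedge T| = |S|$. One checks $S \wedge T \leq S$ (equal cardinalities, and $(S \wedge T)^i \leq S^i$ entrywise) and $S \wedge T \leq T$ (since $|S| \geq |T|$ and $(S \wedge T)^i = \min(S^i,T^i) \leq T^i$ for $i \leq |T|$), so $S \wedge T$ is a lower bound. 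For maximality, if $L \leq S$ and $L \leq T$, then $|L| \geq |S| = |S \wedge T|$; for $i \leq k$ we get $L^i \leq \min(S^i,T^i)$, and for $k < i \leq |S|$ we get $L^i \leq S^i = (S \wedge T)^i$, so $L \leq S \wedge T$.

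I do not expect a genuine obstacle: the proof is pure bookkeeping from the definition of $\leq$. The only thing requiring a moment's care is keeping the two cases $|S| \geq |T|$ and $|S| < |T|$ aligned with the piecewise formula for $\wedge$, and verifying the splice inequality $\min(S^k,T^k) < S^{k+1}$ (respectively $< T^{k+1}$) so that $S \wedge T$ is genuinely an element of $2^{[n]}$.
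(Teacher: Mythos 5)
Your proof is correct and follows essentially the same route as the paper's: verify that the displayed unions are strictly increasing lists (so their $i$th smallest elements are $\min(S^i,T^i)$, resp.\ $\max(S^i,T^i)$), check they bound $S$ and $T$, and then check extremality against an arbitrary bound. The only differences are presentational (you additionally spell out the partial-order axioms and the splice inequality $\min(S^k,T^k)<S^{k+1}$, which the paper leaves implicit).
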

\begin{proof}
Without loss of generality suppose $|S| \geq |T| = k$. Now, let $P = \cup_{i=1}^k\min(S^i,T^i) \cup \{S^{k+1},\dots,S^{|S|}\}$ and $Q = \cup_{i=1}^k\max(S^i,T^i)$. 
\par For $i < j \leq k$, $S^i < S^j$ and $T^i < T^j$. Hence, $\min(S^i,T^i) < \min(S^j,T^j)$ and $\max(S^i,T^i) < \max(S^j,T^j)$.
From this, we can deduce that $|P| = |S|$, $|Q| = |T|$, and for $i \in [k]$, $P^i = \min(S^i,T^i)$ and $Q^i = \max(S^i,T^i)$.
\par Clearly, $P \leq S,T \leq Q$ as $P^i \leq S^i,T^i \leq Q^i$ for all $i \in [k]$ and $P^i = S^i$ for $k < i \leq |S|$. Now, let $P',Q' \in 2^{[n]}$ such that $P' \leq S,T \leq Q'$, then,
\begin{align*}
    (P')^i &\leq \min(S^i,T^i) = P^i &&\text{for } i \in [k], \\
    (P')^i &\leq S^i = P^i &&\text{for } k < i \leq |S|, \\
    Q^i &= \max(S^i,T^i) \leq (Q')^i &&\text{for } i \in [k].
\end{align*}
Therefore, $P' \leq P$ and $Q \leq Q'$, and hence $P = S \wedge T$ and $Q = S \vee T$.
\end{proof}

\begin{corollary} \label{minmax}
Let $S,T \in 2^{[n]}$ such that $|S| = |T| = k$, then
\[ S \wedge T = \bigcup_{i=1}^k \min(S^i,T^i) \text{ and } S \vee T = \bigcup_{i=1}^k \max(S^i,T^i). \]
\end{corollary}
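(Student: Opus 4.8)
The plan is to obtain this directly from the lattice lemma proved just above, by specialising it to the case of equal cardinalities. Setting $|S| = |T| = k$, we have $k = \min(|S|,|T|)$, so the expression $\bigcup_{i=1}^k \max(S^i,T^i)$ for $S \vee T$ is already exactly the claimed one and nothing more is needed for the join. For the meet, I would note that the case distinction in the lemma's formula for $S \wedge T$ is decided by whether $|S| \geq |T|$ or $|S| < |T|$; since $|S| = |T|$ we are in the first branch, and the appended block $\{S^{k+1},\dots,S^{|S|}\}$ equals $\{S^{k+1},\dots,S^{k}\} = \emptyset$ because $|S| = k$. Hence the formula collapses to $S \wedge T = \bigcup_{i=1}^k \min(S^i,T^i)$, as required.

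I do not anticipate any genuine obstacle here: the statement is a routine specialisation of the preceding lemma. The only point meriting an explicit sentence is the verification that, under $|S| = |T|$, we land in the intended branch of the case split and that the tail block truly vanishes — both immediate. As an alternative one could simply rerun the short argument from the lemma's proof with the tail blocks deleted throughout, but quoting the lemma is cleaner and shorter.
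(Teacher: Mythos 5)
Your proposal is correct and matches the paper's (implicit) reasoning exactly: the corollary is stated as an immediate specialisation of the preceding lattice lemma, with the tail block $\{S^{k+1},\dots,S^{|S|}\}$ vanishing because $|S|=k$. Nothing further is needed.
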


Throughout the rest of this paper, we will often take $N \in 2^{[n]}$ to be fixed and then consider $S \wedge N$ and $S \vee N$ for $S \in 2^{[n]}$, so we introduce the following functions. For each fixed $N \in 2^{[n]}$, define $\phi_N, \psi_N \colon 2^{[n]} \rightarrow 2^{[n]}$ by $\phi_N(S) = S \wedge N$ and $\psi_N(S) = S \vee N$.

Remark that $\phi_N$ and $\psi_N$ are order-preserving, that is, for $S,T \in 2^{[n]}$, if $S \leq T$, then $\phi_N(S) \leq \phi_N(T)$ and $\psi_N(S) \leq \psi_N(T)$. 
\par Now, let $X = \rho_n(x)$ and $Y = \rho_n(y)$ for some $x,y \in [n]^*$. Consider $\cG_{X,Y}$, note that the rows of $X$ and $Y$ are indexed by $2^{[n]}$, so the nodes of $\cG_{X,Y}$ are subsets of $[n]$.
\par Let $e$ be an edge in $\cG_{X,Y}$ from $S$ to $T$ labelled $V \in \{X,Y\}$. We define $\phi_N(e)$ and $\psi_N(e)$ to be the edge from $\phi_N(S)$ to $\phi_N(T)$ labelled $V$ and the edge from $\psi_N(S)$ to $\psi_N(T)$ labelled $V$, respectively. Note that both these edges exist as $\phi_N$ and $\psi_N$ are order-preserving and by the definition of $\rho_n$, $(V)_{S,T} \neq -\infty$ if and only if $|S| = |T|$ and $S \leq T$.
\par Moreover, for any path $\gamma$ from $S$ to $T$ in $\cG_{X,Y}$, let $\phi_N(\gamma)$ and $\psi_N(\gamma)$ be the paths obtained by applying $\phi_N$ and $\psi_N$ respectively to the edges of $\gamma$. Again, $\phi_N(\gamma)$ and $\psi_N(\gamma)$ are well-defined paths in $\cG_{X,Y}$ as $\phi_N$ and $\psi_N$ are order-preserving.

\par Now, to proceed, we require a definition and lemma from \cite{JKPlactic}. We say that a word $w \in [n]^*$ is \emph{readable} from $S$ to $T$ if there exists an ordered sequence of sets
\[ S \leq P_1 \leq \cdots \leq P_{|w|} \leq T\]
such that $w_i \in P_i$ for each $1 \leq i \leq |w|$. 

\begin{lemma}[Lemma 2.2, \cite{JKPlactic}] \label{readlengthlem}
For every $w \in [n]^*$ and $S,T \in 2^{[n]}$ with $|S| = |T|$ and $S \leq T$, the
entry $\rho_n(w)_{S,T}$ is the maximum length of a scattered subword of $w$ that can be read
from $S$ to $T$
\end{lemma}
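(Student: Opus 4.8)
The plan is to reformulate the matrix entry $\rho_n(w)_{S,T}$ combinatorially via the (iterated) tropical matrix product, and then to sandwich it between the ``readable scattered subword'' quantity by two opposite inequalities. Since $\rho_n(w) = \rho_n(w_1)\rho_n(w_2)\cdots\rho_n(w_{|w|})$, the standard description of a product of tropical matrices as a maximum over weighted paths shows that $\rho_n(w)_{S,T}$ is the maximum, over all chains $S = P_0 \le P_1 \le \cdots \le P_{|w|} = T$ in $(2^{[n]},\le)$, of $\sum_{i=1}^{|w|}\rho_n(w_i)_{P_{i-1},P_i}$; choices of intermediate subsets not forming such a chain force a $-\infty$ summand, and at least one chain exists because $S \le T$ (take $P_0 = \cdots = P_{|w|-1} = S$). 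As $|S| = |T|$, every chain of this form automatically satisfies $|P_0| = \cdots = |P_{|w|}|$, so by the definition of $\rho_n$ on generators each summand equals $1$ if $w_i \in \cup[P_{i-1},P_i]$ and $0$ otherwise. Hence $\rho_n(w)_{S,T}$ is the largest possible value of $\#\{\, i : w_i \in \cup[P_{i-1},P_i]\,\}$, the maximum being over chains $S = P_0 \le \cdots \le P_{|w|} = T$. The only fact about order intervals I will use is the tautology that $x \in \cup[P,Q]$ exactly when $x \in R$ for some $R$ with $P \le R \le Q$.

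To prove that $\rho_n(w)_{S,T}$ is at most the maximum length of a scattered subword of $w$ readable from $S$ to $T$, fix a chain $S = P_0 \le \cdots \le P_{|w|} = T$ attaining the maximum above, and let $j_1 < \cdots < j_k$ list exactly the indices $i$ with $w_i \in \cup[P_{i-1},P_i]$, so that $k = \rho_n(w)_{S,T}$. For each $l$ choose $R_l$ with $P_{j_l-1} \le R_l \le P_{j_l}$ and $w_{j_l} \in R_l$. Since the $P_i$ form a weakly increasing chain and $j_l \le j_{l+1}-1$, we obtain $S = P_0 \le R_1 \le R_2 \le \cdots \le R_k \le P_{|w|} = T$, which witnesses that the length-$k$ scattered subword $w_{j_1}w_{j_2}\cdots w_{j_k}$ of $w$ is readable from $S$ to $T$.

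For the reverse inequality, let $u = w_{j_1}\cdots w_{j_k}$ (with $j_1 < \cdots < j_k$) be a scattered subword of $w$ readable from $S$ to $T$, witnessed by $S \le Q_1 \le \cdots \le Q_k \le T$ with $w_{j_l} \in Q_l$. I would construct a chain $S = P_0 \le \cdots \le P_{|w|} = T$ that ``holds'' $Q_l$ across the block of positions $\{\, j_l, j_l+1, \dots, j_{l+1}-1\,\}$: put $P_i = S$ for $0 \le i < j_1$, put $P_i = Q_l$ for $j_l \le i < j_{l+1}$ when $1 \le l < k$, and put $P_i = T$ for $j_k \le i \le |w|$. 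With the conventions $Q_0 := S$ and $Q_k$ read as $T$ at the final relevant step, this chain is weakly increasing, runs from $S$ to $T$, and at each position $i = j_l$ it moves from $Q_{l-1}$ to $Q_l$; since $Q_{l-1} \le Q_l$, the set $Q_l$ lies in the order interval $[P_{j_l-1},P_{j_l}]$, so $w_{j_l} \in Q_l \subseteq \cup[P_{j_l-1},P_{j_l}]$. Thus this single chain already contributes at least $k$, giving $\rho_n(w)_{S,T} \ge k$. Combining the two inequalities proves the lemma.

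I do not anticipate a genuine obstacle; the proof is essentially bookkeeping. The steps needing care are: stating cleanly the reduction of $\rho_n(w)_{S,T}$ to a maximum over $\le$-chains (a routine unwinding of the tropical matrix product, but it underpins everything that follows); checking that the block-holding construction behaves correctly at the boundary positions $i = j_l$, in particular at $i = j_k$, where the target set is $T$ rather than $Q_k$; and handling the degenerate cases (empty $w$, or $k = 0$), in which both sides equal $0$.
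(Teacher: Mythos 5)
Your proof is correct. Note, however, that the paper does not prove this statement at all: it is quoted verbatim as Lemma 2.2 of the cited Johnson--Kambites paper, so there is no in-paper argument to compare against. Your argument is the natural one --- unfold the tropical product $\rho_n(w)_{S,T}$ as a maximum over chains $S = P_0 \leq \cdots \leq P_{|w|} = T$ of the number of indices $i$ with $w_i \in \cup[P_{i-1},P_i]$, then pass between contributing indices of a chain and witnessing sequences for readable scattered subwords in both directions --- and all the bookkeeping (equal cardinalities forced by $|S|=|T|$, the boundary blocks at $i = j_1$ and $i = j_k$, the degenerate case $k=0$) checks out.
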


\begin{lemma} \label{splittingpaths}
Let $x,y \in [n]^*$, $X = \rho_n(x)$, $Y = \rho_n(y)$. Let $S,T \in 2^{[n]}$ such that $|S| = |T|$ and $S \leq T$, and $\gamma$ be a path from $S$ to $T$ in $\cG_{X,Y}$. Then, for any $N \in [S,T]$, there exist three paths all with the same labelling as $\gamma$, $\sigma$ from $S$ to $N$, $\tau$ from $N$ to $T$, and $\lambda$ from $N$ to $N$ such that $w(\gamma) + w(\lambda) \leq w(\sigma) + w(\tau)$.
\end{lemma}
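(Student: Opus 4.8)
I would argue as follows.

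The plan is to take $\sigma=\phi_N(\gamma)$ and $\tau=\psi_N(\gamma)$. Since $N\in[S,T]$ we have $S\le N\le T$, and as $|S|=|T|$ this forces $|N|=|S|$; write $k$ for this common size. Then $\phi_N(S)=S\wedge N=S$, $\phi_N(T)=T\wedge N=N$, $\psi_N(S)=S\vee N=N$ and $\psi_N(T)=T\vee N=T$, so $\sigma$ runs from $S$ to $N$ and $\tau$ from $N$ to $T$, and both carry the labelling of $\gamma$ since $\phi_N$ and $\psi_N$ act on edges and preserve their labels. For $\lambda$ I would observe that every edge of $\cG_{X,Y}$ joins two equal-size sets with source $\le$ target, so any path from $N$ to $N$ is a $\le$-increasing chain from $N$ to $N$ and hence is constant at $N$; thus there is a unique path $\lambda$ from $N$ to $N$ with the labelling of $\gamma$. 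By Lemma~\ref{readlengthlem}, a scattered subword readable from $N$ to $N$ uses only letters lying in $N$, so each edge of $\lambda$ carrying a word $v\in\{x,y\}$ has weight $(\rho_n(v))_{N,N}=|v|_N$; hence $w(\lambda)=\sum_e|v_e|_N$, the sum over the edges $e$ of $\gamma$, where $v_e\in\{x,y\}$ is the word on $e$.

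It remains to prove $w(\gamma)+\sum_e|v_e|_N\le w(\phi_N(\gamma))+w(\psi_N(\gamma))$. Both sides are sums over the edges of $\gamma$, so it suffices to prove, for one edge from $P$ to $Q$ labelled by a word $v$ and with $V=\rho_n(v)$, the inequality
\[ V_{P,Q}+V_{N,N}\ \le\ V_{P\wedge N,\,Q\wedge N}+V_{P\vee N,\,Q\vee N}, \]
in which $V_{N,N}=|v|_N$. For this I would use the description (a straightforward reformulation of Lemma~\ref{readlengthlem}, or the path-weight formula for products of tropical matrices) that $V_{P,Q}$ is the maximum, over $\le$-increasing chains $P=B_0\le B_1\le\dots\le B_{|v|}=Q$ of size-$k$ sets, of the number of indices $j$ with $v_j\in\cup[B_{j-1},B_j]$. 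Fixing an optimal such chain and applying $\phi_N$, resp.\ $\psi_N$, yields chains from $P\wedge N$ to $Q\wedge N$, resp.\ from $P\vee N$ to $Q\vee N$, so $V_{P\wedge N,Q\wedge N}$ is at least the number of $j$ with $v_j\in\cup[B_{j-1}\wedge N,B_j\wedge N]$, and likewise for $\vee$; also $V_{N,N}=|v|_N$ counts the $j$ with $v_j\in N$. The displayed inequality therefore reduces to the following pointwise statement, applied with $C=B_{j-1}$ and $D=B_j$: for every $a\in[n]$ and size-$k$ sets $C\le D$, the element $a$ lies in at least as many of the two sets $\cup[C\wedge N,D\wedge N]$ and $\cup[C\vee N,D\vee N]$ as it lies in of the two sets $\cup[C,D]$ and $N$.

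This pointwise statement is the heart of the proof, and I would establish it by a short case analysis on whether $a\in\cup[C,D]$ and whether $a\in N$, using only Corollary~\ref{minmax} --- which gives $(C\wedge N)^m=\min(C^m,N^m)$ and $(C\vee N)^m=\max(C^m,N^m)$ --- and the elementary fact, for size-equal $C\le D$, that $a\in\cup[C,D]$ if and only if $C^m\le a\le D^m$ for some $m$ (one direction is immediate; the other is witnessed by $\{C^1,\dots,C^{m-1},a,D^{m+1},\dots,D^{|D|}\}\in[C,D]$). I expect the main obstacle to be the subcase $a\in N\setminus\cup[C,D]$: writing $a=N^i$, one must verify that $a$ still lies in $\cup[C\wedge N,D\wedge N]$ (when $D^i\ge a$) or in $\cup[C\vee N,D\vee N]$ (when $D^i<a$), and this is exactly what lets the extra term $\sum_e|v_e|_N$ be absorbed on the right. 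The subcase $a\in N\cap\cup[C,D]$, where $a$ must land in \emph{both} right-hand sets, is handled by a similar coordinate hunt, passing when necessary from coordinate $i$ to a coordinate $l$ with $C^l\le a\le D^l$ that is forced to lie on the appropriate side of $i$. Summing the per-edge inequality over all edges of $\gamma$ then gives $w(\gamma)+w(\lambda)\le w(\sigma)+w(\tau)$, as required.
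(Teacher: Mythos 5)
Your proof is correct and follows the paper's skeleton exactly: the same three paths $\sigma=\phi_N(\gamma)$, $\tau=\psi_N(\gamma)$ and the all-loops path $\lambda$ at $N$, and the same reduction to the per-edge inequality $V_{F_1,F_2}+V_{N,N}\le V_{\phi_N(F_1),\phi_N(F_2)}+V_{\psi_N(F_1),\psi_N(F_2)}$. Where you genuinely diverge is in proving that inequality. The paper takes a maximum-length scattered subword $u$ of $v$ readable from $F_1$ to $F_2$ with a witnessing sequence $(S_i)$, applies $\phi_N$ and $\psi_N$ to that sequence, and argues that $u$ together with every occurrence of each letter of $N$ can be read across the two image sequences, using $u_i\in\phi_N(S_i)\cup\psi_N(S_i)$, $N\subseteq\phi_N(S_i)\cup\psi_N(S_i)$, and $u_i\in N\Rightarrow u_i\in\phi_N(S_i)\cap\psi_N(S_i)$. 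You instead use the tropical matrix-product formula $V_{P,Q}=\max\#\{j:v_j\in\cup[B_{j-1},B_j]\}$ over increasing chains indexed by \emph{all} positions of $v$, and reduce everything to a pointwise indicator inequality for the four sets $\cup[C,D]$, $N$, $\cup[C\wedge N,D\wedge N]$, $\cup[C\vee N,D\vee N]$. This buys you a cleaner bookkeeping step: in the paper's version the occurrences of letters of $N$ at positions of $v$ lying outside $u$ must be slotted consistently into one of the two image sequences, which is asserted rather than spelled out, whereas your chain formulation gives every position of $v$ a slot from the outset so the counting is purely local. Your case analysis of the pointwise statement (including the two subcases requiring a shift from coordinate $i$ to the coordinate $m$ witnessing $C^m\le a\le D^m$) is correct and closes the argument.
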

\begin{proof}
Let $N \in [S,T]$, $\sigma = \phi_N(\gamma)$, and $\tau = \psi_N(\gamma)$, these paths have the same labelling at $\gamma$ by the definition of $\phi_N$ and $\psi_N$. Moreover, by Corollary~\ref{minmax}, $\sigma$ and $\tau$ are paths from $S$ to $N$ and $N$ to $T$ respectively as $S \leq N \leq T$. Let $\lambda$ be the path consisting entirely of loops at $N$ with the same labelling as $\gamma$. It suffices to show, that for each edge $f \in \gamma$,
\[w(f) + V_{N,N} \leq w(\phi_N(f)) + w(\psi_N(f)).\]
where $V \in \{X,Y\}$ is the matrix corresponding to the label of the edge $f$. Suppose $f$ is an edge from $F_1$ to $F_2$, $\phi_N(f)$ is an edge from $D_1$ to $D_2$ and $\psi_N(f)$ is an edge from $E_1$ to $E_2$. Then, this can be written as follows.
\begin{align*}
V_{F_1,F_2} + V_{N,N} \leq V_{D_1,D_2} + V_{E_1,E_2}. 
\end{align*}
If $V = X$, let $v = x$, otherwise, if $V = Y$, let $v = y$. Let $u$ be a scattered subword of $v$ that can be read from $F_1$ to $F_2$ of maximum length and $|u| = t$ for some $t \in \N_0$. Then, there exists an ordered sequence of sets
\[ F_1 \leq S_1 \leq \cdots \leq S_t \leq F_2\]
such that $u_i \in S_i$. Now, consider the two following sequences
\[ D_1 = \phi_N(F_1) \leq \phi_N(S_1) \leq \cdots \leq \phi_N(S_t) \leq \phi_N(F_2) = D_2,\]
\[ E_1 = \psi_N(F_1) \leq \psi_N(S_1) \leq \cdots \leq \psi_N(S_t) \leq \psi_N(F_2) = E_2.\]
By Corollary~\ref{minmax}, we have that for all $i \in [t]$, $u_i \in \phi_N(S_i) \cup \psi_N(S_i)$ and $N \subseteq \phi_N(S_i) \cup \psi_N(S_i)$. Moreover, if $u_i \in N$, then $u_i \in \phi_N(S_i) \cap \psi_N(S_i)$ as $u_i \in N \cap S_i$. Therefore, $u$ and every occurrence of each $m \in N$ is read by the above sequences from $D_1$ to $D_2$ and from $E_1$ to $E_2$. Hence, by Lemma~\ref{readlengthlem}, $V_{D_1,D_2} + V_{E_1,E_2} \geq  V_{F_1,F_2} + V_{N,N}$ as $V_{N,N} = |v|_N$. 
\end{proof}

\section{Plactic Identities} \label{PlacIdenChap}
In this section, we find a new set of semigroup identities satisfied by the plactic monoid of rank $n$. Moreover, we construct semigroup identities satisfied by $\Pb_n$ that are not satisfied by $UT_{n+1}(\T)$, showing that the variety generated by $UT_{n+1}(\T)$ is not contained in the variety generated by $\Pb_n$. 
Therefore, the variety generated by $\Pb_n$ is strictly contained in the variety generated by $\Pb_{n+1}$.

\begin{theorem} \label{SgpIdentityThm}
Let $q \in \{a,b\}^*$ contain every word of length $n-1$ in $\{a,b\}^*$ as a factor. Let $u = q^{h}aq^{h}$ and $v = q^{h}bq^{h}$ where $h = \floor{\frac{n^2}{4}}$. Then, the semigroup identity $u[a \mapsto ab, b \mapsto ba] = v[a \mapsto ab,b \mapsto ba]$ is satisfied by $\Pb_n$.
\end{theorem}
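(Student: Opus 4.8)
The plan is to route everything through the faithful representation $\rho_n$, reduce the identity to a statement about tropical matrices, and then analyse paths in the associated weighted digraph.

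First, since $\rho_n$ is faithful and a morphism, $\Pb_n$ satisfies the stated identity if and only if, for all $x,y\in[n]^*$, writing $P=\rho_n(xy)$, $Q=\rho_n(yx)$ and $R=q(a\mapsto P,b\mapsto Q)$, one has $R^{h}PR^{h}=R^{h}QR^{h}$ in $M_{2^{[n]}}(\T)$: indeed $u[a\mapsto ab,b\mapsto ba]$ evaluated at $(\rho_n(x),\rho_n(y))$ is $\rho_n(u[a\mapsto xy,\,b\mapsto yx])=u(a\mapsto P,b\mapsto Q)=R^{h}PR^{h}$, and likewise for $v$. The feature of $P,Q$ that makes this plausible is that they agree on the diagonal: for each $N\in 2^{[n]}$, $P_{N,N}=|xy|_N=|yx|_N=Q_{N,N}\ge 0$ (e.g.\ by Lemma~\ref{readlengthlem} with $S=T=N$, where only the constant sequence is admissible), while both matrices equal $-\infty$ outside the cardinality-blocks $\{(S,T):|S|=|T|,\ S\le T\}$.

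Next I would reduce to a purely combinatorial claim about paths. Fix $S\le T$ with $|S|=|T|$ (all other entries of both sides are $-\infty$). By the path description of tropical products, $(R^{h}PR^{h})_{S,T}$ is the maximum weight of a path from $S$ to $T$ in $\cG_{P,Q}$ labelled by the word $q^{h}aq^{h}$, and $(R^{h}QR^{h})_{S,T}$ the same for $q^{h}bq^{h}$; these two label words differ only in the central position. Since a loop at any node carries the same weight whether labelled $a$ or $b$, it suffices to prove that $(R^{h}PR^{h})_{S,T}=\max_{N}\bigl(R^{h}_{S,N}+P_{N,N}+R^{h}_{N,T}\bigr)$ and the analogue for $Q$ --- the two right-hand sides then coincide because $P_{N,N}=Q_{N,N}$. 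The inequality ``$\ge$'' is immediate (concatenate a maximal $q^{h}$-path $S\to N$, a $P$-loop at $N$, and a maximal $q^{h}$-path $N\to T$), so the content is: in a maximum-weight path from $S$ to $T$ labelled $q^{h}aq^{h}$, the central edge may be taken to be a loop.

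To prove that, write the path as $\gamma=\sigma\,e\,\tau$ with $\sigma\colon S\to N_1$ labelled $q^{h}$, $e\colon N_1\to N_2$ the central $a$-edge, and $\tau\colon N_2\to T$ labelled $q^{h}$; if $N_1=N_2$ we are done, so assume $N_1<N_2$. Any path between two sets of a fixed cardinality $k$ visits a chain in the $k$-element layer of $(2^{[n]},\le)$, which has at most $k(n-k)+1\le\lfloor n^2/4\rfloor+1=h+1$ elements; as $N_1<N_2$ the chain visited by $\sigma$ has at most $h$ elements, whereas $\sigma$ has $h+1$ block-endpoints, so two consecutive ones coincide and one of the $h$ copies of $q$ in $\sigma$ labels a path that is entirely loops at a single node $M\le N_1$ (a ``loop block''); symmetrically $\tau$ has one. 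Now I would use Corollary~\ref{splitapply} to write a maximal chain from $N_1$ up to $N_2$ as a concatenation of intervals of chain-length $\le n$ along each of which $|xy|_{\cdot}$ is monotone. Because $q$ contains every length-$(n-1)$ word as a factor, a single copy of $q$ can be steered so as to loop at the bottom of one such interval, then climb across it realising each covering step $M_i\lessdot M_{i+1}$ by one edge whose label is chosen to contribute $\max\bigl(P_{M_i,M_{i+1}},Q_{M_i,M_{i+1}}\bigr)$, then loop at the top; by Lemma~\ref{readlengthlem} each such step contributes at least $|xy|_{M_i}$ (read only letters lying in $M_i$), so monotonicity makes this ``climbing block'' weigh no less than a pure loop block at its bottom node. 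Feeding the loop block(s) of $\sigma$ into this construction, and using Lemma~\ref{splittingpaths} to shunt the resulting surplus loop-weight to where it is needed, one should obtain a path $\sigma^{*}\colon S\to N_2$ labelled $q^{h}$ with $w(\sigma^{*})\ge R^{h}_{S,N_1}+\bigl(P_{N_1,N_2}-P_{N_2,N_2}\bigr)$; then $w(\gamma)=w(\sigma)+P_{N_1,N_2}+w(\tau)\le R^{h}_{S,N_2}+P_{N_2,N_2}+R^{h}_{N_2,T}$, which is of the desired form. The $q^{h}bq^{h}$ case is identical.

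The hard part is this last reroute: not that turning a loop block into a climbing block avoids losing weight, but that $h=\lfloor n^2/4\rfloor$ copies of $q$ on each side of the central letter supply precisely enough slack to absorb the deficit $P_{N_1,N_2}-P_{N_2,N_2}$ created by trading the central $a$-edge $N_1\to N_2$ for a $b$-loop at $N_2$. Pinning this down --- juggling Lemma~\ref{readlengthlem} (lower bounds on transition-edge weights), Corollary~\ref{splitapply} (monotonicity of $|xy|_{\cdot}$ along controlled chains), the de Bruijn property of $q$ (to choose transition labels and fit each climb into one $q$-block), and Lemma~\ref{splittingpaths} (to relocate loop-weight) --- is where essentially all the effort goes; the steps before it are formal.
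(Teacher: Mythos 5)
Your reduction is correct and coincides with the paper's: pass through the faithful representation $\rho_n$, observe that $P=\rho_n(xy)$ and $Q=\rho_n(yx)$ have equal diagonals and the same $-\infty$-pattern, and reduce the identity to showing that for each $S\le T$ a maximum-weight path labelled $q^h a q^h$ from $S$ to $T$ in $\cG_{P,Q}$ can be chosen with its central edge a loop. You have also correctly assembled the relevant tools (the chain-length bound $h+1$, Lemma~\ref{readlengthlem}, Corollary~\ref{splitapply}, Lemma~\ref{splittingpaths}, and the de Bruijn property of $q$). But the entire content of the theorem lies in the step you defer as ``the hard part,'' and the sketch you give for it does not work as stated. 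First, you aim to reroute in one shot from $N_1$ to $N_2$ and place the loop at $N_2$, via the unproved inequality $w(\sigma^*)\ge R^h_{S,N_1}+(P_{N_1,N_2}-P_{N_2,N_2})$; nothing in your accounting produces the reading-weight term $P_{N_1,N_2}$, and $N_2$ is not in general the right node at which to park the loop --- the paper instead iterates, contracting a pair of nodes $A_i$ (the \emph{latest maximum-weight} node of the left half) and $B_i$ (the \emph{earliest maximum-weight} node of the right half) toward each other, one short interval at a time, and the choice of maximum-weight nodes is exactly what makes the weight bookkeeping close (loops displaced from $A_i$ to the new node $N_i$ gain weight because Corollary~\ref{splitapply} guarantees $w(A_i)\le w(N_i)$).

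Second, two of your intermediate claims are not available. You cannot realise ``each covering step $M_i\lessdot M_{i+1}$ by one edge whose label is chosen to contribute $\max(P_{M_i,M_{i+1}},Q_{M_i,M_{i+1}})$'': the labels along the path are dictated position-by-position by the fixed word $q[a\mapsto ab,b\mapsto ba]$. The de Bruijn property is needed for the opposite manoeuvre: one first produces a \emph{labelled} simple path of length $<n$ (in the paper, the maximal simple scattered subpath of $\phi_{N_i}(\pi_i)$, where the length bound comes from the chain-length-$\le n$ conclusion of Corollary~\ref{splitapply}, and the weight bound comes from applying Lemma~\ref{splittingpaths} to the whole middle segment \emph{including the central edge}, which is how $P_{N_1,N_2}$ enters), and then finds its label word as a factor of $q$ so that it fits inside a single all-loop $q$-block. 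Likewise, Corollary~\ref{splitapply} does not decompose $[N_1,N_2]$ into intervals along which $|xy|_\cdot$ is monotone; it yields one intermediate set whose weight dominates a single short subinterval. Finally, you never verify that $\sigma$ contains enough all-loop $q$-blocks to absorb all the extra non-loop edges your multi-interval climb would introduce (you only exhibit one such block). Each of these gaps is repairable, but together they constitute the actual proof; as written, the argument establishes only the formal reduction.
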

\begin{proof}
We need to show that, for all $X,Y \in \Pb_n$, 
\[u(a \mapsto XY, b \mapsto YX) = v(a \mapsto XY,b \mapsto YX). \]
So, let $x,y \in [n]^*$, $X = \rho_n(x)$, $Y = \rho_n(y)$ and for each $N \in 2^{[n]}$ say $w(N) = |xy|_N = (XY)_{N,N} = (YX)_{N,N}$ is the weight of $N$, where the second equality holds by Lemma~\ref{readlengthlem}. Note that $w(N)$ is a non-negative integer for all $N \in 2^{[n]}$. 
\par Recall, by the definition of $\rho_n$, for $S,T \in 2^{[n]}$, 
\begin{align*}
    u(a \mapsto XY, b \mapsto YX)_{S,T} \neq -\infty &\iff |S| = |T| \text{ and } S \leq T, \\
    &\iff v(a \mapsto XY, b \mapsto YX)_{S,T} \neq -\infty.
\end{align*}
Thus, to show $u(a \mapsto XY, b \mapsto YX) = v(a \mapsto XY, b \mapsto YX)$, it suffices to check the matrix entries indexed by $S, T$ where $|S| = |T|$ and $S \leq T$.
\par Let $S,T \in 2^{[n]}$ be such that $|S| = |T|$ and $S \leq T$, we aim to show that there exists a maximal weight path from $S$ to $T$ labelled by $u[a \mapsto XY, b \mapsto YX]$ in $\cG_{XY,YX}$ such that the central edge labelled by $XY$ is a loop. From this, we can construct a path of equal weight labelled by $v[a \mapsto XY,b \mapsto YX]$ by swapping the central edge labelled $XY$ with a loop labelled $YX$, hence showing that $u(a \mapsto XY, b \mapsto YX)_{S,T} \leq v(a \mapsto XY, b \mapsto YX)_{S,T}$.
\par Consider a maximal weight path $\gamma_1$ from $S$ to $T$ labelled $u[a \mapsto XY, b \mapsto YX]$. 
For $i \in \N$, we now define $\gamma'_{i}$ and $\gamma_{i+1}$ recursively. For each $i \in \N$, write $\gamma_i = \alpha_i \circ \zeta_i \circ \beta_i$ where $\alpha_i$ and $\beta_i$ are labelled by $q^h[a \mapsto XY, b \mapsto YX]$ and $\zeta_i$ is labelled by $XY$. Let $A_i$ be the maximum weight node appearing latest in $\alpha_i$ and $B_i$ be the maximum weight node appearing earliest in $\beta_i$.
\par Let $\gamma'_i = \alpha_i' \circ \zeta_i \circ \beta_i'$, where $\alpha_i'$ and $\beta_i'$ are obtained by taking $\alpha_i$ and $\beta_i$ respectively and replacing any length $|q|$ subpath of $\alpha_i$ containing only loops with $|q|$ loops at $A_i$ and replacing any length $|q|$ subpath of $\beta_i$ containing only loops with $|q|$ loops at $B_i$, labelling the loops so that $\alpha_i'$ and $\beta'_i$ are labelled $q^h[a \mapsto XY, b \mapsto YX]$.
Note that all other edges retain their labelling as $\alpha_i$ and $\beta_i$ are labelled by $q^h[a \mapsto XY, b \mapsto YX]$, thus adding or removing $|q|$ consecutive loops shifts the edges $|q|$ places, to the same label. Remark that $w(\gamma'_i) \geq w(\gamma_i)$ as $A_i$ and $B_i$ are maximum weight nodes in $\alpha_i$ and $\beta_i$, respectively.
\par Clearly $A_i$ and $B_i$ are nodes of $\gamma'_i$. So, for $i \in \N$, let $\pi_i$ be the subpath of $\gamma'_i$ between $A_i$ and $B_i$ with no loops at $A_i$ or $B_i$, where $\pi_i$ is the empty path at $A_i$ if $A_i = B_i$. We define $\gamma_{i+1}$ in the following way.
\begin{enumerate}[(i)]
\item \label{caseA=B} If $A_i = B_i$, then let $\gamma_{i+1} = \gamma'_i$.

\item \label{caseAB} If $A_i \neq B_i$ and $w(A_i) \leq w(B_i)$. Then, by applying Corollary~\ref{splitapply}(\ref{splitapplyincre}) to $A_i$ and $B_i$, there exists $N_i \in [A_i,B_i]$, such that $[A_i,N_i]$ has chain length at most $n$ and for all $M \in [A_i,N_i]$,
\begin{equation} \label{mweightorder}
    w(M) \leq w(N_i).
\end{equation}
\par Now, apply Lemma~\ref{splittingpaths} to $\pi_i$ with $N_i$ to receive three paths, $\sigma_i$ from $A_i$ to $N_i$, $\tau_i$ from $N_i$ to $B_i$, and $\lambda_i$ from $N_i$ to $N_i$. Then,
\[ w(\pi_i) + |\lambda_i| w(N_i) = w(\pi_i) + w(\lambda_i) \leq w(\sigma_i) + w(\tau_i) \]
as $w(N_i) = (XY)_{N,N} = (YX)_{N,N}$, so $w(\lambda_i) = |\lambda_i| w(N_i)$.
\par Let $\sigma'_i$ be the maximal-length simple scattered subpath of $\sigma_i$. Note that, $[A_i,N_i]$ have chain length at most $n$, so $\sigma'_i$ has length at most $n-1$. By (\ref{mweightorder}), for $M \in [A_i,N_i]$, we have that $w(M) \leq w(N_i)$. Thus,
\begin{align*}
     w(\pi_i) &\leq w(\sigma_i) - |\lambda_i|w(N_i) + w(\tau_i) \\
     &\leq w(\sigma_i') - |\sigma_i'|w(N_i) + w(\tau_i)
\end{align*}
as $|\lambda_i| = |\sigma_i|$ and $w(\sigma_i) \leq w(\sigma_i') + ( |\sigma_i| -|\sigma_i'|)w(N_i)$. Therefore,
\begin{equation} \label{pii}
w(\pi_i) + |q|w(A_i) \leq w(\sigma_i') + (|q| - |\sigma_i'|)w(A_i) + w(\tau_i).
\end{equation}
as $w(A_i) \leq w(N_i)$.
\par As $q$ contains every word of length $n-1$ as a factor, there exists a factor of $q$, say $p_i$, such that $\sigma'_i$ is labelled by $p_i[a \mapsto XY,b \mapsto YX]$ as $|\sigma'_i|< n$. Write $q = s_ip_ie_i$ for some $s_i,e_i \in \{a,b\}^*$. Let $\delta_i$ be the path labelled $q[a \mapsto XY, b \mapsto YX]$ from $A_i$ to $N_i$ which contains $|s_i|$ loops at $A_i$ labelled $s_i[a \mapsto XY, b \mapsto YX]$, followed by the simple path $\sigma'_i$, and then $|e_i|$ loops at $N_i$ labelled $e_i[a \mapsto XY, b \mapsto YX]$.
\par The chain length of $[S,T]$ is at most $h+1$ by \cite[Theorem 3.2]{JKPlactic}. Thus, there are at most $h$ non-loop edges in $\gamma'_i$. As $A_i \neq B_i$, there is a non-loop in $\zeta_i \circ \beta_i'$. 
Hence, $\gamma'_i$ contains a subpath of loops at $A_i$ labelled by $q[a \mapsto XY, b \mapsto YX]$ as there are at most $h-1$ non-loop edges in $\alpha_i'$ 
and any subpath of loops of length $|q|$ in $\alpha_i'$ 
is at $A_i$ by construction.
\par We construct $\gamma_{i+1}$ by first replacing the first $|q|$ loops at $A_i$ labelled by $q[a \mapsto XY,b \mapsto YX]$ with $\delta_i$, replacing $\pi_i$ with $\tau_i$, and replacing all the loops at $A_i$ in between with loops at $N_i$.

\item \label{caseBA} Similarly, if $w(A_i) > w(B_i)$. Then, by applying Corollary~\ref{splitapply}(\ref{splitapplydecre}) to $A_i$ and $B_i$, there exists $N_i \in [A_i,B_i]$, such that $[N_i,B_i]$ has chain length at most $n$ and for all $M \in [N_i,B_i]$,
\begin{equation} \label{mweightorder2}
    w(M) \leq w(N_i).
\end{equation}
\par Now, apply Lemma~\ref{splittingpaths} to $\pi_i$ with $N_i$ to receive three paths, $\sigma_i$ from $A_i$ to $N_i$, $\tau_i$ from $N_i$ to $B_i$, and $\lambda_i$ from $N_i$ to $N_i$. Then,
\[ w(\pi_i) + |\lambda_i| w(N_i) = w(\pi_i) + w(\lambda_i) \leq w(\sigma_i) + w(\tau_i) \]
as $w(N_i) = (XY)_{N,N} = (YX)_{N,N}$, so $w(\lambda_i) = |\lambda_i|w(N_i)$.
\par Let $\tau'_i$ be the maximal-length simple scattered subpath of $\tau_i$. Note that, $[N_i,B_i]$ have chain length at most $n$, so $\tau'_i$ has length at most $n-1$. By (\ref{mweightorder2}), for $M \in [N_i,B_i]$, we have that $w(M) \leq w(N_i)$. Thus,
\begin{align*}
     w(\pi_i) &\leq w(\sigma_i) + w(\tau_i) - |\lambda_i|w(N_i)  \\
     &\leq w(\sigma_i) + w(\tau_i') - |\tau_i'|w(N_i)
\end{align*}
as $|\lambda_i| = |\tau_i|$ and $w(\tau_i) \leq w(\tau_i') + ( |\tau_i| -|\tau_i'|)w(N_i)$. Therefore,
\begin{equation} \label{pii2}
w(\pi_i) + |q|w(B_i) \leq w(\sigma_i) + w(\tau_i') + (|q| - |\tau_i'|)w(B_i).
\end{equation}
as $w(B_i) \leq w(N_i)$.
\par As $q$ contains every word of length $n-1$ as a factor, there exists a factor of $q$, say $p_i$, such that $\tau'_i$ is labelled by $p_i[a \mapsto XY,b \mapsto YX]$ as $|\tau'_i|< n$. Write $q = s_ip_ie_i$ for some $s_i,e_i \in \{a,b\}^*$. Let $\delta_i$ be the path labelled $q[a \mapsto XY, b \mapsto YX]$ from $N_i$ to $B_i$ which contains $|s_i|$ loops at $N_i$ labelled $s_i[a \mapsto XY, b \mapsto YX]$, followed by the simple path $\tau'_i$, and then $|e_i|$ loops at $B_i$ labelled $e_i[a \mapsto XY, b \mapsto YX]$.
\par The chain length of $[S,T]$ is at most $h+1$ by \cite[Theorem 3.2]{JKPlactic}. Thus, there are at most $h$ non-loop edges in $\gamma'_i$. As $A_i \neq B_i$, there is a non-loop in $\alpha_i' \circ \zeta_i$. 
Hence, $\gamma'_i$ contains a subpath of loops at $B_i$ labelled by $q[a \mapsto XY, b \mapsto YX]$ as there are at most $h-1$ non-loop edges in $\beta_i'$ 
and any subpath of loops of length $|q|$ in $\beta_i'$ 
is at $B_i$ by construction.
\par We construct $\gamma_{i+1}$ by first replacing $\pi_i$ with $\sigma_i$, replacing the last $|q|$ loops at $B_i$ labelled by $q[a \mapsto XY,b \mapsto YX]$ with $\delta_i$, and replacing all the loops at $B_i$ in between with loops at $N_i$. 
\end{enumerate}
\par In any of the above cases we have that $w(\gamma'_i) \leq w(\gamma_{i+1})$. In case (\ref{caseA=B}), this is clear, and in cases (\ref{caseAB}) and (\ref{caseBA}) we can see this by considering the construction of $\gamma_{i+1}$ with (\ref{pii}) and (\ref{pii2}) respectively. Thus, $\gamma_{i+1}$ is a maximal-weight path labelled $u[a \mapsto XY, b \mapsto YX]$ from $S$ to $T$.
\par If $A_i \neq B_i$ and $w(A_i) \leq w(B_i)$ then $N_i$ is a node of $\gamma_{i+1}$, $A_i < N_i$, and $w(A_i) \leq w(N_i)$. Thus, $A_i < A_{i+1}$ and $B_{i+1} \leq B_i$ as $B_i$ is a node of $\gamma_{i+1}$. Similarly, if $w(A_i) > w(B_i)$, then $A_i \leq A_{i+1}$ and $B_{i+1} < B_i$. If $A_i = B_i$ then by the definition of $\gamma_{i+1}$, $A_i = A_{i+1}$ and $B_i = B_{i+1}$.
\par So, if $A_i \neq B_i$, then the chain length of $[A_{i+1},B_{i+1}]$ is strictly less than the chain length of $[A_i,B_i]$. Thus, as $[S,T]$ has finite chain length, we have that there exists $k \in \N$ such that $A_k = B_k$.
\par Therefore, $\gamma_k = \alpha_k \circ \zeta_k \circ \beta_k$ is a maximal-weighted path labelled $u[a \mapsto XY, b \mapsto YX]$ such that $\zeta_k$ is a loop at $A_k = B_k$ labelled $XY$, and hence $u(a \mapsto XY, b \mapsto YX)_{S,T} \leq v(a \mapsto XY, b \mapsto YX)_{S,T}$.
\par Moreover, we can dually show that there exists a maximal weighted path labelled $v[a \mapsto XY, b \mapsto YX]$ such that the middle edge is a loop labelled $YX$ implying that $u(a \mapsto XY, b \mapsto YX)_{S,T} \geq v(a \mapsto XY, b \mapsto YX)_{S,T}$. 
Thus, as $x,y \in [n]^*$ were taken arbitrarily and $\rho_n$ is a faithful representation of $\Pb_n$, the semigroup identity $u[a \mapsto ab, b \mapsto ba] = v[a \mapsto ab,b \mapsto ba]$ is satisfied by $\Pb_n$.
\end{proof}

\begin{remark}
The shortest semigroup identities satisfied by $\Pb_n$ given by the above theorem are of length $O(2^{n-1}n^2)$ as the shortest word containing every word of length $n-1$ in $\{a,b\}^*$ as a factor is of length $O(2^{n-1})$. \cite[p.79]{RibeiroThesis} The shortest previously known identities for $\Pb_n$ are of length at least $O(2^{h+2})$ where $h = \floor{\frac{n^2}{4}}$. Thus, the identities given in this paper are shorter than those previously known for all $n \geq 6$. When $n = 6$, previously the shortest known identity was of length 2082, 
now the shortest known identity is of length 1298. 
\end{remark}

\begin{corollary} \label{PlacnUTn1}
Let $n \in \N$. There exists a semigroup identity satisfied by $\Pb_n$ but not by $UT_{n+1}(\T)$.
\end{corollary}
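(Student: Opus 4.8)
The plan is to use the identity furnished by Theorem~\ref{SgpIdentityThm}. Fix any $q \in \{a,b\}^*$ containing every word of length $n-1$ over $\{a,b\}$ as a factor (we are free to choose a convenient such $q$), put $h = \floor{\frac{n^2}{4}}$, $u = q^h a q^h$ and $v = q^h b q^h$, and recall from Theorem~\ref{SgpIdentityThm} that $\Pb_n$ satisfies $u[a \mapsto ab, b\mapsto ba] = v[a\mapsto ab, b\mapsto ba]$. It therefore suffices to produce $X, Y \in UT_{n+1}(\T)$ with $u(a\mapsto XY, b\mapsto YX) \neq v(a\mapsto XY, b\mapsto YX)$. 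Writing $M = \bigl(q(a\mapsto XY, b\mapsto YX)\bigr)^{h}$ and using that the central letter of $u$ (resp.\ of $v$) is $a$ (resp.\ $b$), this amounts to finding $X, Y \in UT_{n+1}(\T)$ for which $M\, XY\, M \neq M\, YX\, M$; that is, the $h$-fold product $M$ of copies of the ``$(n-1)$-rich'' word $q(a\mapsto XY, b\mapsto YX)$ must not wash out the difference between $XY$ and $YX$.

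To construct such $X, Y$ I would work with the weighted digraph $G_{X,Y}$ on the vertex set $\{1,\dots,n+1\}$. The idea is to include loops of both labels at the two extreme vertices (so that arbitrary words can be absorbed there); a distinguished simple directed path $1 \to 2 \to \cdots \to n+1$ of length $n$ running through every vertex, its $n$ edges labelled and weighted so as to ``spell'' a carefully chosen word $r \in \{a,b\}^{n}$; and loops of controlled weight at the intermediate vertices. Choosing $r$ and the weights appropriately, one arranges that a maximum-weight path from $1$ to $n+1$ labelled $u(a\mapsto XY, b\mapsto YX)$ must traverse this simple $n$-path in a position straddling the central occurrence of $XY$, forcing the $(1,n+1)$-entry to take a strictly larger value than any path labelled $v(a\mapsto XY, b\mapsto YX)$ can attain (or making the former entry finite and the latter $-\infty$). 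By the path description of tropical matrix products and the fact that $X, Y \in UT_{n+1}(\T)$, this exhibits $u(a\mapsto XY, b\mapsto YX) \neq v(a\mapsto XY, b\mapsto YX)$, which is Corollary~\ref{PlacnUTn1}.

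The main obstacle is the combinatorial verification that the construction genuinely separates the two words — concretely, that $v(a\mapsto XY, b\mapsto YX)$ admits no path of the critical weight. Since $q(a\mapsto XY, b\mapsto YX)$ is $(n-1)$-rich but carries no length-$n$ constraint, the relevant analysis is really a statement about factors of $q' = q[a\mapsto ab, b\mapsto ba]$: one must choose $q$ (a De Bruijn-type word with prescribed extremal letters should do) and the word $r$ of length exactly $n$ so that $r$ occurs in $q'^{h}\,ab\,q'^{h}$ across its centre but nowhere in $q'^{h}\,ba\,q'^{h}$ — neither inside the periodic part $q'^{h}$ nor straddling its central $ba$ — and this is precisely where the $n-2$ letters of context around the central block (length $n$ rather than $n-1$) are used. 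This is also the exact point at which the proof of Theorem~\ref{SgpIdentityThm} fails to extend: there every relevant simple scattered subpath $\sigma'_i$ has length at most $n-1$ (as $[A_i,N_i]$ has chain length at most $n$ in $2^{[n]}$) and is hence labelled by a factor of $q$, whereas in an $(n+1)\times(n+1)$ matrix a simple path through all $n+1$ vertices has length $n$, one step beyond what the factors of $q$ can cover, and it is precisely this extra step that the construction exploits. Consistently, every identity satisfied by $\Pb_n$ is satisfied by $UT_n(\T)$, so $UT_n(\T)$ does satisfy the identity above; the separation appears only at dimension $n+1$.
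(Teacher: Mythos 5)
Your overall strategy --- take the identity from Theorem~\ref{SgpIdentityThm} with a conveniently chosen $q$, and separate $u$ from $v$ in $UT_{n+1}(\T)$ via a length-$n$ pattern that only a simple path through all $n+1$ vertices can detect --- is exactly the strategy of the paper, and your closing remark about why the proof of Theorem~\ref{SgpIdentityThm} cannot be pushed up to $UT_{n+1}(\T)$ (simple paths of length $n$ versus factors of $q$ of length at most $n-1$) is an accurate diagnosis. But the write-up leaves both load-bearing steps unperformed, and you say so yourself (``the main obstacle is the combinatorial verification\dots''). First, you never exhibit $q$ or the pattern $r$. The paper takes $q$ to begin with $b$, end with $a^{n-1}$, contain every word of length $n-1$ as a factor, but not contain $a^n$ as a factor; this forces $a^n$ to be a factor of $u = q^h a q^h$ (the trailing $a^{n-1}$ of the first $q^h$ followed by the central $a$) but not of $v = q^h b q^h$ (no $a^n$ inside $q$, every junction inside $q^h$ reads $a^{n-1}b$, and the centre reads $a^{n-1}bb\cdots$). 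That is precisely the occurrence/non-occurrence condition you state abstractly for $r$, with $r$ the image of $a^n$ under $a \mapsto ab$, $b \mapsto ba$; without an explicit $q$ the condition is an aspiration, not a fact.

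Second, the matrix construction. The paper does not build $X,Y$ at all: it invokes \cite[Lemma 3.1]{ATropicalPlactic}, which says that if $a^n$ is a factor of one side but not the other, then $u[a \mapsto ab, b \mapsto ba] = v[a \mapsto ab, b \mapsto ba]$ fails in $UT_{n+1}(\T)$. Your digraph with a distinguished simple path $1 \to 2 \to \cdots \to n+1$ and absorbing loops at the extremes is the right shape for a proof of that lemma, but as written it specifies no weights, does not verify that a maximal $u$-labelled path must traverse the simple path straddling the central $XY$, and does not prove that no $v$-labelled path attains the same weight --- which is the entire content of the separation. So: right approach, correct identification of where the work lies, but a genuine gap remains; you need either to cite the existing separation lemma (as the paper does) or to actually carry out the construction and the combinatorial verification you defer.
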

\begin{proof}
Let $q \in \{a,b\}^*$, begin with $b$, end in $a^{n-1}$, contain every word of length $n-1$ in $\{a,b\}^*$ as a factor, but not contain $a^n$ as a factor. Let $u = q^{h}aq^{h}$ and $v = q^{h}bq^{h}$ where $h = \floor{\frac{n^2}{4}}$. Then, by Theorem~\ref{SgpIdentityThm}, $u[a \mapsto ab,b \mapsto ba] = v[a \mapsto ab, b \mapsto ba]$ is satisfied by $\Pb_n$. However, by \cite[Lemma 3.1]{ATropicalPlactic}, $u[a \mapsto ab,b \mapsto ba] = v[a \mapsto ab, b \mapsto ba]$ is not satisfied by $UT_{n+1}(\T)$ as $a^n$ is a factor of $u$ but not $v$.
\end{proof}

We can now conclude that plactic monoids of different ranks generate different semigroup varieties.
\begin{corollary} \label{PlacDiff}
Let $n \in \N$. There exists a semigroup identity satisfied by $\Pb_n$ but not satisfied by $\Pb_{n+1}$.
\end{corollary}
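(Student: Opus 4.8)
The plan is to deduce this directly from Corollary~\ref{PlacnUTn1} together with the containment of varieties established by Johnson and Kambites in \cite[Theorem 4.4]{JKPlactic}. That theorem tells us that for every $m \in \N$, the plactic monoid $\Pb_m$ admits a faithful tropical representation of the appropriate shape, so that every semigroup identity satisfied by $\Pb_m$ is already satisfied by $UT_m(\T)$; equivalently, the variety generated by $\Pb_m$ is contained in the variety generated by $UT_m(\T)$. The key is to apply this with $m = n+1$, not $m=n$.

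First I would invoke Corollary~\ref{PlacnUTn1} to fix a semigroup identity $\Phi$, namely $u[a\mapsto ab,\, b\mapsto ba] = v[a\mapsto ab,\, b\mapsto ba]$ with $u,v$ as in that statement, which is satisfied by $\Pb_n$ but \emph{not} by $UT_{n+1}(\T)$. Then I would apply \cite[Theorem 4.4]{JKPlactic} in the contrapositive form: since every identity satisfied by $\Pb_{n+1}$ is satisfied by $UT_{n+1}(\T)$, any identity that \emph{fails} in $UT_{n+1}(\T)$ must also \emph{fail} in $\Pb_{n+1}$. As $\Phi$ fails in $UT_{n+1}(\T)$, it fails in $\Pb_{n+1}$, while by construction it holds in $\Pb_n$. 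This exhibits the required identity and completes the proof; one may also remark that, combined with \cite[Theorem 4.4]{JKPlactic} applied at rank $n$, this shows the variety generated by $\Pb_n$ is strictly contained in the variety generated by $\Pb_{n+1}$.

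I do not expect a genuine obstacle here: all of the substantive work is already carried out in Theorem~\ref{SgpIdentityThm} and Corollary~\ref{PlacnUTn1}, and the present statement is a short formal consequence. The only point that needs care is bookkeeping of the rank — ensuring that the Johnson--Kambites containment is used at rank $n+1$ so that non-satisfaction in $UT_{n+1}(\T)$ transfers to non-satisfaction in $\Pb_{n+1}$ — and, if one wants the statement to be entirely self-contained, briefly recalling the precise form of \cite[Theorem 4.4]{JKPlactic} being used.
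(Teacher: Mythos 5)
Your argument is correct and is essentially identical to the paper's own proof: both take the identity from Corollary~\ref{PlacnUTn1} and use \cite[Theorem 4.4]{JKPlactic} at rank $n+1$ (every identity of $\Pb_{n+1}$ holds in $UT_{n+1}(\T)$) in contrapositive form to transfer failure in $UT_{n+1}(\T)$ to failure in $\Pb_{n+1}$. No issues.
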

\begin{proof}
By Corollary~\ref{PlacnUTn1}, there exists a semigroup identity satisfied by $\Pb_n$ but not $UT_{n+1}(\T)$. However, all identities satisfied by $\Pb_{n+1}$ are satisfied by $UT_{n+1}(\T)$ \cite[Theorem 4.4]{JKPlactic}. Thus, there exists an identity satisfied by $\Pb_n$ which is not satisfied by $\Pb_{n+1}$.
\end{proof}

\begin{corollary}
Let $n \in \N$. If $\Pb_n$ generates the same semigroup variety as $UT_k(\T)$, then $k = n$.
\end{corollary}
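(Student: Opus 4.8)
The plan is to pin down $k$ by squeezing the set of identities of $\Pb_n$ between those of $UT_n(\T)$ and $UT_{n+1}(\T)$. Throughout, write $\mathrm{Id}(S)$ for the class of semigroup identities satisfied by a semigroup $S$, and note that $\Pb_n$ generating the same variety as $UT_k(\T)$ means exactly that $\mathrm{Id}(\Pb_n) = \mathrm{Id}(UT_k(\T))$.

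The first step is to record the monotonicity of the family $\bigl(UT_m(\T)\bigr)_{m\in\N}$ with respect to satisfaction of identities. For each $m$, the map sending an $m\times m$ tropical upper triangular matrix $A$ to the $(m+1)\times(m+1)$ matrix having $A$ in its top-left block, the entry $0$ in position $(m+1,m+1)$, and $-\infty$ in the remaining entries of the last row and column, is an embedding of $UT_m(\T)$ as a subsemigroup of $UT_{m+1}(\T)$; this is a routine block-matrix verification. Hence every identity of $UT_{m+1}(\T)$ holds in $UT_m(\T)$, so $\mathrm{Id}(UT_1(\T))\supseteq\mathrm{Id}(UT_2(\T))\supseteq\cdots$, and by \cite[Corollary 5.4]{ATropicalPlactic} each of these inclusions is proper. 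Consequently, for $i,j\in\N$ one has $\mathrm{Id}(UT_i(\T))\subseteq\mathrm{Id}(UT_j(\T))$ if and only if $i\ge j$.

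With this in hand the two bounds on $k$ follow at once. For $k\le n$: by Corollary~\ref{PlacnUTn1} there is an identity satisfied by $\Pb_n$ but not by $UT_{n+1}(\T)$, so $\mathrm{Id}(UT_k(\T))=\mathrm{Id}(\Pb_n)\not\subseteq\mathrm{Id}(UT_{n+1}(\T))$, and the monotonicity of the previous step forces $k\le n$. For $k\ge n$: by \cite[Theorem 4.4]{JKPlactic} every identity satisfied by $\Pb_n$ is satisfied by $UT_n(\T)$, so $\mathrm{Id}(UT_k(\T))=\mathrm{Id}(\Pb_n)\subseteq\mathrm{Id}(UT_n(\T))$, and again the monotonicity forces $n\le k$. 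Combining the two inequalities gives $k=n$.

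All the substantive content is imported: Corollary~\ref{PlacnUTn1} (the new contribution of the paper), the Johnson--Kambites inclusion $\mathrm{Id}(\Pb_n)\subseteq\mathrm{Id}(UT_n(\T))$ from \cite[Theorem 4.4]{JKPlactic}, and the strictness of the chain $\mathrm{Id}(UT_1(\T))\supsetneq\mathrm{Id}(UT_2(\T))\supsetneq\cdots$ from \cite[Corollary 5.4]{ATropicalPlactic}. Given these, nothing remains but a short squeeze, so there is no genuine obstacle; the only point needing a moment's care is getting the direction of the inclusions right in the monotonicity statement and checking that the corner embedding $UT_m(\T)\hookrightarrow UT_{m+1}(\T)$ is indeed a semigroup homomorphism.
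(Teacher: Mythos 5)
Your proof is correct and follows essentially the same route as the paper: both bounds on $k$ come from squeezing $\mathrm{Id}(\Pb_n)$ between $\mathrm{Id}(UT_n(\T))$ (via \cite[Theorem 4.4]{JKPlactic}) and the failure of its identities in $UT_{n+1}(\T)$ (via Corollary~\ref{PlacnUTn1}), together with the separation of the $UT_m(\T)$ identity classes. The only cosmetic difference is that you derive the strictly decreasing chain $\mathrm{Id}(UT_1(\T))\supsetneq\mathrm{Id}(UT_2(\T))\supsetneq\cdots$ from the corner embedding plus \cite[Corollary 5.4]{ATropicalPlactic}, whereas the paper cites \cite[Theorem 3.4]{ATropicalPlactic} directly for that separation and leaves the embedding implicit.
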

\begin{proof}
By \cite[Theorem 3.4]{ATropicalPlactic}, for all $k < n$, there exists an identity satisfied by $UT_{k}(\T)$ but not by $UT_n(\T)$, but by \cite[Theorem 4.4]{JKPlactic}, every identity satisfied by $\Pb_n$ is satisfied by $UT_n(\T)$. Thus, for all $k < n$, there exists an identity satisfied by $UT_k(\T)$ but not $\Pb_n$. Finally, by Corollary~\ref{PlacnUTn1}, there exists an identity satisfied by $\Pb_n$ but not $UT_k(\T)$ for all $k > n$.
\end{proof}

It is known that the plactic monoid of rank $n$ generates the same semigroup variety as $UT_n(\T)$ for $n \leq 3$, so given the above corollary, we pose the following question.
\begin{question}
Does $\Pb_n$ generate the same semigroup variety as $UT_n(\T)$ for any $n \geq 4$?
\end{question}

\bibliographystyle{plain}
\bibliography{Bib.bib}
\end{document}